\documentclass[12pt,reqno]{amsart}
\usepackage{tikz-cd}
\usepackage{dsfont,amsmath, amssymb, graphicx, hyperref}
\usepackage{verbatim} 
\usepackage{amssymb, amsmath, amsthm, amsfonts} 
\usepackage[mathscr]{euscript}
\usepackage{enumerate}
\setlength{\textheight}{220mm} \setlength{\textwidth}{155mm}
\setlength{\oddsidemargin}{1.25mm}
\setlength{\evensidemargin}{1.25mm} \setlength{\topmargin}{0mm}

\renewcommand{\pmod}[1]{\,(\textup{mod}\,#1)}

\newcommand{\LL}{\mathcal{L}}

\newcommand{\ZZ}{\mathbb{Z}}

\newcommand{\RR}{\mathbb{R}}

\newcommand{\CC}{\mathbb{C}}
\newcommand{\NN}{\mathbb{N}}

\linespread{1}

\newtheoremstyle{dotless}{}{}{\itshape}{}{\bfseries}{}{ }{}
\theoremstyle{dotless}

\newtheorem{theorem}{Theorem}[section]
\newtheorem{lemma}[theorem]{Lemma}
\newtheorem{corollary}[theorem]{Corollary}

\title{A binary quadratic Titchmarsh divisor problem}
\subjclass[2010]{11L20,	11N37, 11N36, 11L07 }
\keywords{Divisor sums, Primes}
\author{Junxian Li}
\address{Department of Mathematics, University of Illinois, 1409 West Green Street, Urbana, IL 61801, USA.}
\email{jli135@illinois.edu}
\begin{document}
\maketitle
\begin{abstract}
	We consider a binary quadratic variant of the Titchmarsh divisor problem and give an asymptotic formula for $\sum_{p^2+q^2\leq N} \tau(p^2+q^2+1)$, where $p,q$ are primes.
\end{abstract}
\section{Introduction}
Let $\tau(n)=\sum_{d\mid n} 1$ be the divisor function. The Titchmarsh divisor problem is concerned with finding an asymptotic formula for the average 
\begin{align}
\sum_{p\leq x} \tau(p-1),
\end{align}
where $p$ belongs to the set of primes. Under the Generalized Riemann Hypothesis (GRH), Titchmarsh \cite{titchmarsh1930divisor} proved that 
\begin{align}\label{tichmarsh}
\sum_{p\leq x} \tau(p-1)= \frac{\zeta(2)\zeta(3)}{\zeta(6)} x+O\left( \frac{x\log\log x}{\log x}\right).
\end{align}
Linnik \cite{Linnik} proved \eqref{tichmarsh} unconditionally using his dispersion method. Later, Halberstam \cite{Halberstam} gave a short proof using the Bombieri-Vinogradov theorem on primes in arithmetic progressions. Bombieri, Friedlander and Iwaniec \cite{BFI} as well as Fouvry \cite{Fouvry} improved \eqref{tichmarsh} to 
\begin{align}\label{bfi}
\sum_{p\leq x}\tau(p-1)= \frac{\zeta(2)\zeta(3)}{\zeta(6)}x+c\operatorname{Li}(x)+O\left( \frac{x}{(\log x)^A}\right),
\end{align}
for some constant $c$ and any $A$, where $\operatorname{Li}(x)=\int_2^x \frac{1}{\log t}dt$. Most recently, 
Drappeau \cite{Drappeau} gave a power saving in the error in \eqref{bfi} under GRH. For primes in arithmetic progressions, Felix \cite{Felix} established a formula for 
\begin{align}
\sum_{ \substack{p\leq x\\ p \equiv a\pmod k}} \tau\left( \frac{p-a}{k}\right) =c_{k,a} x+O_k\left(\frac{x}{\log x}\right),
\end{align} for some constant $c_{k,a}$. A quadratic analogue of the Titchmarsh problem was considered by Xi \cite{Xi}, where he obtained the correct order of magnitude given by
\begin{align}
x\ll \sum_{p\leq x} \tau(p^2+1)  \ll x.
\end{align}
 
In this paper, we obtain an asymptotic formula for  
\begin{align*}
\sum_{p^2+q^2\leq N} \tau(p^2+q^2+1).
\end{align*}

\begin{theorem}
	For $N$ large enough, we have
	\begin{align}
	\sum_{p^2+q^2\leq N} \tau(p^2+q^2+1)= \frac{\pi}{4}\prod_{p>2} \left( 1- \frac{1+3p\left(\frac{-1}{p}\right)}{(p-1)^2p}\right) \frac{N}{\log N} \left(1+O\left(\frac{(\log\log N)^2}{\log N}\right)\right),
	\end{align}where $p, q$ belong to the set of primes.
\end{theorem}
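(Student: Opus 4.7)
The plan is to apply Dirichlet's hyperbola method to the divisor function, reducing the problem to counting prime pairs $(p,q)$ in the disk $p^2+q^2\leq N$ subject to congruence conditions modulo $d$, then handling the resulting counts uniformly in $d$ via a Bombieri--Vinogradov-type equidistribution. Starting from the identity $\tau(n) = 2\sum_{d\mid n,\, d<\sqrt{n}}1 + \mathbf{1}[n=\square]$ with $n = p^2+q^2+1$ and interchanging summation,
\begin{equation*}
\sum_{p^2+q^2\leq N}\tau(p^2+q^2+1) \;=\; 2\sum_{d\leq\sqrt{N+1}}T(d) \;+\; O\bigl(N^{1/2}\log^C N\bigr),
\end{equation*}
where $T(d) = \#\{(p,q)\text{ primes}: p^2+q^2\leq N,\ d\mid p^2+q^2+1,\ d^2<p^2+q^2+1\}$ and the error collects pairs with $p^2+q^2+1$ a perfect square (negligible via the factorization $m^2-1=(m-1)(m+1)$ and the divisor bound). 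Expanding $T(d)$ into residue classes modulo $d$ introduces the multiplicative function
\begin{equation*}
\rho^*(d) \;=\; \#\{(\alpha,\beta)\in(\ZZ/d\ZZ)^*\times(\ZZ/d\ZZ)^* :\ \alpha^2+\beta^2\equiv -1\pmod{d}\},
\end{equation*}
with $\rho^*(2^k)=0$ and $\rho^*(p^k) = (p-2-3(-1/p))\,p^{k-1}$ for odd $p$ (elementary count modulo $p$ plus Hensel).

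For each admissible pair $(\alpha,\beta)$ the inner count equals $\sum_{q\equiv\beta\pmod{d}}\pi(\sqrt{N-q^2};d,\alpha)$. Bombieri--Vinogradov applied to the inner sum over primes $p$ slice-by-slice replaces $\pi(\sqrt{N-q^2};d,\alpha)$ with $\pi(\sqrt{N-q^2})/\phi(d)$ on average over $d\leq \sqrt{N}/(\log N)^B$, and a further equidistribution input for the outer sum over $q$ yields the joint count $\sim Q(N)/\phi(d)^2$, where $Q(N)\sim \pi N/(\log N)^2$ is the number of prime pairs in the quarter disk. Assembling gives the main term
\begin{equation*}
\mathrm{MT} \;\sim\; 2\, Q(N)\sum_{d\leq\sqrt{N+1}}\frac{\rho^*(d)}{\phi(d)^2},
\end{equation*}
and the remaining singular sum is evaluated by Perron: the Dirichlet series $\sum_d(\rho^*(d)/\phi(d)^2)\,d^{-s}$ factors as $G(s)\zeta(s+1)$ with $G$ holomorphic at $s=0$, so $\sum_{d\leq D}\rho^*(d)/\phi(d)^2\sim G(0)\log D$. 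A direct Euler-product calculation from the values of $\rho^*$ above expresses $G(0)$ in terms of $\prod_{p>2}\bigl(1-(1+3p(-1/p))/(p(p-1)^2)\bigr)$, and combining with the value of $Q(N)$ and $\log\sqrt{N+1} = \tfrac12\log N + O(1/N)$ recovers the stated main term.

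The main obstacle is the uniformity of the equidistribution for $d$ near the top of the hyperbola range: for each fixed $q$, the primes $p$ range only up to $\sqrt{N-q^2}\leq\sqrt{N}$, so moduli $d\sim\sqrt{N}$ sit exactly at the Bombieri--Vinogradov threshold for the one-dimensional slice. To reach moduli in $[\sqrt{N}/(\log N)^B,\sqrt{N+1}]$ one must exploit the 2D structure, either by combining Bombieri--Vinogradov with a Cauchy--Schwarz / large-sieve step across the secondary variable $q$, or by applying a secondary hyperbola cut that trades large divisors for their smaller cofactors. Managing this boundary range produces the error of size $O(N(\log\log N)^2/(\log N)^2)$ and thus the secondary factor $(\log\log N)^2/\log N$ appearing in the statement.
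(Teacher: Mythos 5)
Your main-term architecture matches the paper's: the hyperbola identity for $\tau$, expansion into residue classes counted by your $\rho^*(d)$ (the paper's $s(d)$, with the same local densities $p-2-3\left(\frac{-1}{p}\right)$ at odd primes), equidistribution of prime pairs in those classes for $d$ up to $\sqrt{N}(\log N)^{-A}$, and evaluation of the singular sum by Perron via the factorization $\zeta(1+s)G(s)$. One technical remark there: the paper uses the Barban--Davenport--Halberstam mean-square theorem (after covering the disk by $\ll(\log N)^{10}$ squares and applying Cauchy--Schwarz), not Bombieri--Vinogradov, because for each modulus $d$ one must sum the error over roughly $s(d)\asymp d$ admissible residue pairs $(u,v)$; a sup-norm bound over classes as in Bombieri--Vinogradov is not the right shape, and you would in any case be forced into the mean-square form you allude to.

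The genuine gap is the range $\sqrt{N}(\log N)^{-A}<d\leq\sqrt{N+1}$, which you correctly identify as the main obstacle but do not resolve; this range is the entire content of Sections 5 and 6 of the paper. Neither of your two suggested escapes works as stated. The ``secondary hyperbola cut'' fails because when $d\in[\sqrt{N}(\log N)^{-A},\sqrt{N+1}]$ the complementary divisor $m=(p^2+q^2+1)/d$ also lies within a power of $\log N$ of $\sqrt{N}$, so switching to the cofactor lands you back at the same threshold. The Cauchy--Schwarz/large-sieve idea is essentially the dispersion route of Plaksin, but it is not a one-line patch: carried out naively it loses a factor $(\log\log N)^{3}$ against the main term (the paper's appendix sketch records exactly this failure, getting $\ll N(\log\log N)^3/\log N$, which is not $o(N/\log N)$). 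The paper instead majorizes the prime indicators by upper-bound sieve weights $\theta(m)=\sum_{e\mid m,\,e\leq N^{\delta}}\lambda_e$, applies Poisson summation in both variables, extracts the zero-frequency term (which is where the sieve's $(\log E)^{-2}$ saving produces the acceptable bound $\ll N\log\log N/(\log N)^2$), and then must prove square-root cancellation $E(e_1,e_2,h_1,h_2,d)\ll C^{\omega(d)}\sqrt{(h_1,h_2,d)\,d}$ for the complete sum $\sum_{e_1^2u^2+e_2^2v^2\equiv-1\ (d),\,(uv,d)=1}e\left(\frac{uh_1+vh_2}{d}\right)$, via quadratic Gauss sums, Weil's bound for Kloosterman sums, and Sali\'e sums, with a delicate case analysis at prime powers. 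Without this (or an equivalent substitute), the asymptotic is not established: you would at best obtain the main term plus an error that is not smaller than the main term.
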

A related question is the Hardy-Littlewood problem concerning asymptotic formulas for
\begin{align}
\sum_{p\leq N} r(N-p) \text{ or } \sum_{p\leq N} r(p-a),
\end{align} where $r(n)$ is the number of ways of writing $n$ as the sum of two squares. This was solved in the works of Hooley \cite{Hooley} under GRH.  Unconditional proofs were given by Linnik \cite{Linnik2} and Bredihin \cite{Bredihin} using the ``dispersion method''. More recently, Friedlander and Iwaniec gave a shorter proof in \cite{Friedlander}. Greaves \cite{Greaves} considered the number of solutions to $N=p^2+q^2+x^2+y^2$ and gave the lower bound with the right order of magnitude. Later Plaksin \cite{Plaksin} obtained an asymptotic formula of the number of solutions to $N=p^2+q^2+x^2+y^2$.

Let us fix some notation: We use the relation $a\sim A$ to denote $A\leq a\leq 2A$. The arithmetic function $\omega(n)$ denotes the number of distinct prime divisors of $n$. For a prime $p$ and natural numbers $\alpha$ and $n$, we write $p^\alpha\| n$ if $p^\alpha\mid n$ but $p^{\alpha+1}\nmid n$. The letters $p$ and $q$ denote primes, the expression $e(x)$ denotes $\exp(2\pi i x)$, and $(a,b,c)$ denotes $ \gcd (a,b,c)$. Finally, for an odd integer $d$, let $$d^*= \left(\frac{-1}{d} \right) d=\left\{\begin{array}{ll}
	d,& d\equiv 1\pmod 4,\\
	-d, & d\equiv 3\pmod 4.
\end{array}\right.$$

\section{Outline of the proof}
\begin{lemma}\label{divisorFormula}
	\begin{align}
	\tau(n)= 2\sum_{\substack{d\mid n\\d\leq \sqrt{n}}} 1-\mathds{1}(n=\square),
	\end{align}
	where $\mathds{1}(n=\square)$ vanishes unless $n$ is a square, in which case it is $1$.
\end{lemma}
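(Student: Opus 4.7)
The plan is to exploit the standard involution $d \mapsto n/d$ on the set of divisors of $n$. First I would partition the divisor set of $n$ into the three subsets $D_<=\{d\mid n: d<\sqrt{n}\}$, $D_>=\{d\mid n: d>\sqrt{n}\}$, and $D_==\{d\mid n: d=\sqrt{n}\}$. The map $\sigma(d)=n/d$ is clearly an involution on the set of all divisors of $n$, and since $d<\sqrt{n}$ iff $n/d>\sqrt{n}$, it restricts to a bijection between $D_<$ and $D_>$. Hence $|D_<|=|D_>|$.

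Next, observe that $D_=$ is nonempty precisely when $\sqrt{n}\in\mathbb{Z}$, i.e.\ when $n$ is a perfect square, in which case $|D_=|=1$. Therefore
\[
\tau(n)=|D_<|+|D_=|+|D_>|=2|D_<|+\mathds{1}(n=\square).
\]

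Finally I rewrite this in terms of divisors $d\leq\sqrt{n}$: when $n$ is not a square, $\{d\mid n:d\leq\sqrt{n}\}=D_<$ exactly, and the identity $\tau(n)=2\sum_{d\mid n,\,d\leq\sqrt{n}}1-\mathds{1}(n=\square)$ follows from the display above. When $n$ is a square, $\{d\mid n:d\leq\sqrt{n}\}=D_<\sqcup D_=$, so $\sum_{d\mid n,\,d\leq\sqrt{n}}1=|D_<|+1$, and then
\[
2\sum_{\substack{d\mid n\\d\leq\sqrt{n}}}1-\mathds{1}(n=\square)=2|D_<|+2-1=2|D_<|+1=\tau(n),
\]
which again matches. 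There is no real obstacle here; the only thing to be careful about is the off-by-one correction coming from the fixed point of $\sigma$ when $n$ is a square, which is exactly what the indicator term $\mathds{1}(n=\square)$ accounts for.
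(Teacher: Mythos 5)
Your proof is correct and complete; the paper states this lemma without proof, treating it as standard, and your divisor-pairing argument via the involution $d\mapsto n/d$ (with the fixed point at $d=\sqrt{n}$ accounting for the indicator term) is exactly the canonical justification. Nothing to add.
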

\begin{lemma}\label{RepSumSqure}
Let $r(n)$ be the number of representations of $n$ as a sum of two squares. Then $$r(n)= 4\sum_{d\mid n} \chi(d),$$ where $\chi$ is the non-principal character modulo $4$, and thus $$r(n)\ll\tau(n)\ll n^{\epsilon}.$$ 
\end{lemma}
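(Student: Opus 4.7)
The plan is to prove the identity via the arithmetic of the Gaussian integers $\ZZ[i]$, which is a Euclidean (hence principal, unique factorization) domain whose norm is $N(a+bi)=a^2+b^2$ and whose unit group $\{\pm 1,\pm i\}$ has order $4$. Representations of $n$ as $a^2+b^2$ correspond bijectively to elements $z\in\ZZ[i]$ with $N(z)=n$; two such elements generate the same principal ideal precisely when they are associates, so
\[
r(n)=4\cdot \#\{\text{ideals }\mathfrak a\subset \ZZ[i]:N(\mathfrak a)=n\}.
\]
Since the norm of ideals is multiplicative and $\ZZ[i]$ has unique ideal factorization, the right-hand count is a multiplicative function of $n$. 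The function $n\mapsto \sum_{d\mid n}\chi(d)$ is also multiplicative, being the Dirichlet convolution of $\chi$ with the constant function $1$.

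By multiplicativity it suffices to check the formula at prime powers, and this follows from the standard splitting dichotomy in $\ZZ[i]$. For $p=2$, one has $(2)=(1+i)^2$, so there is a unique ideal of norm $2^k$ for every $k\ge 0$; on the other side $\chi(2^j)=0$ for $j\ge 1$, giving $\sum_{d\mid 2^k}\chi(d)=1$. For $p\equiv 1\pmod 4$, the prime splits as $(p)=\mathfrak p\overline{\mathfrak p}$ with $\mathfrak p\ne\overline{\mathfrak p}$, so ideals of norm $p^k$ are $\mathfrak p^a\overline{\mathfrak p}^b$ with $a+b=k$, giving $k+1$; matching this, $\chi(p^j)=1$ for all $j$, so $\sum_{j=0}^{k}\chi(p^j)=k+1$. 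For $p\equiv 3\pmod 4$, the prime stays inert, so ideals of norm $p^k$ exist only when $k$ is even (yielding a single ideal $(p)^{k/2}$); correspondingly $\chi(p^j)=(-1)^j$ and $\sum_{j=0}^k(-1)^j$ is $1$ or $0$ according to the parity of $k$. In each case the two sides agree, establishing the identity.

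For the inequalities, $|\chi(d)|\le 1$ gives immediately
\[
r(n)=4\sum_{d\mid n}\chi(d)\le 4\tau(n),
\]
and the bound $\tau(n)\ll_\varepsilon n^\varepsilon$ is the classical divisor estimate, obtained from multiplicativity by checking that $\tau(p^a)/p^{a\varepsilon}=(a+1)/p^{a\varepsilon}\le 1$ for all but finitely many primes $p$ and bounding the bad primes individually.

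The only substantive input is the splitting law in $\ZZ[i]$, which reduces to Fermat's theorem that $p\equiv 1\pmod 4$ implies $-1$ is a quadratic residue mod $p$ together with a descent to produce the splitting $p=\pi\overline\pi$; everything else is purely formal multiplicativity and case analysis, so I anticipate no real obstacle beyond invoking these classical facts.
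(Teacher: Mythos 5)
Your proof is correct. The paper states this lemma without proof, treating it as the classical Jacobi/Gauss two-squares formula, so there is no argument in the paper to compare against; your derivation via unique factorization of ideals in $\ZZ[i]$ (units contributing the factor $4$, the splitting dichotomy at $p=2$, $p\equiv 1\pmod 4$, $p\equiv 3\pmod 4$ matching the multiplicative function $\sum_{d\mid n}\chi(d)$) is the standard complete proof, and the concluding bounds $r(n)\le 4\tau(n)\ll_\epsilon n^{\epsilon}$ are handled correctly.
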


Let $Z=\sqrt{N+1}(\log N)^{-A}$, for some sufficently large constant $A$ to be chosen later. From Lemma \ref{divisorFormula} and \ref{RepSumSqure}, we have 
\begin{align*}
\sum_{p^2+q^2\leq N} \tau(p^2+q^2+1)&=2 \sum_{p^2+q^2\leq N} \sum_{ \substack{p^2+q^2+1\equiv 0\pmod d\\ d\leq \sqrt{p^2+q^2+1}}} (1- s(p^2+q^2+1))\\
&= 2\sum_{p^2+q^2\leq N} \sum_{\substack{p^2+q^2\equiv -1 \pmod d\\ d\leq \sqrt{p^2+q^2+1}}} 1+O(\sum_{p^2+q^2\leq N}\sum_{p^2+q^2+1=\square}1)\\
&= 2\sum_{d\leq \sqrt{N+1}} \sum_{ \substack{d^2-1\leq p^2+q^2 \leq N\\ p^2+q^2 \equiv -1 \pmod d}}1+ O(\sum_{n\leq \sqrt{N}} r(n^2-1))\\
&=2\sum_{d\leq \sqrt{N+1}} \sum_{ \substack{ d^2-1\leq p^2+q^2 \leq N\\ p^2+q^2 \equiv -1 \pmod d}}1+O(N^{1/2+\epsilon})\\
&= 2\sum_{d\leq Z} \sum_{\substack{d^2-1\leq p^2+q^2\leq N\\ p^2+q^2\equiv -1 \pmod d}} 1+2\sum_{Z< d\leq \sqrt{N+1}} \sum_{\substack{d^2-1\leq p^2+q^2\leq N\\ p^2+q^2\equiv -1 \pmod d}} 1+O(N^{1/2+\epsilon})\\
&:=M_1+M_2+ O\left(N^{1/2+\epsilon}\right),
\end{align*}
where 
\begin{align}
M_1&=2\sum_{d\leq Z}\sum_{\substack{d^2-1\leq p^2+q^2\leq N\\ p^2+q^2\equiv -1 \pmod d}} 1,\label{M1}\\
M_2&=2\sum_{Z< d\leq \sqrt{N+1}} \sum_{\substack{d^2-1\leq p^2+q^2\leq N\\ p^2+q^2\equiv -1 \pmod d}} 1.\label{M2}
\end{align}
We show that $M_1$ gives the main term in Section \ref{Prelim} and Section \ref{EvalM1}, and that $M_2$ contributes to the error term in Section \ref{EvalM2} and Section \ref{ExpSumEstProof}. Estimates for $M_1$ are similar to the main term estimate of Plaksin \cite{Plaksin}. Assuming some preliminary results in Section \ref{Prelim}, we obtain an asymptotic formula for $M_1$ in Section \ref{EvalM1}. Now we are left to prove an upper bound for $M_2$. Plaksin used Hooley's method, as well as Linnik's dispersion method to study distribution of $u^2+v^2\leq N$ in arithmetic progressions with difference $d$ for $d\leq N^{3/4-\epsilon}$. Instead, we use upper bound sieve weights and separate $p$ and $q$ by introducing a smooth function. After applying the Possion summation formula, we are left with the problem of bounding an exponential sum of the form 
\begin{align*}
E(e_1,e_2,h_1,h_2,d)=\sum_{ \substack{e_1^2 u^2 +e_2^2v^2 \equiv -1 \pmod d\\ (uv,d)=1}} e\left( \frac{uh_1+vh_2}{d}\right).
\end{align*} We assume an upper bound for $E(e_1,e_2,h_1,h_2,d)$ in Section \ref{EvalM2} and prove the bound in Section \ref{ExpSumEstProof}.

\section{Preliminaries}\label{Prelim}
Let $\pi(x)=\#\{p\leq x\}$ and $\pi(x,d,u)=\#\{p \leq x: p \equiv u\pmod d\}$. 
\begin{lemma}[Barban-Davenport-Halberstam]\label{BVsquare}
	For any fixed $C>0$, any $x(\log x)^{-C} \leq Q\leq x$, we have 
	\begin{align*}
	\sum_{d\leq Q} \sum_{\substack{u=1\\(u,d)=1}}^d \left( \pi(x,d,u)- \frac{\pi(x)}{\phi(d)}\right)^2 \ll_C xQ \log x
	\end{align*}
\end{lemma}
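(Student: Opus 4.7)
The plan is to give the standard character-theoretic proof of the Barban--Davenport--Halberstam theorem, combining orthogonality of Dirichlet characters with the multiplicative large sieve for primitive characters.

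First, I would expand the variance via Dirichlet characters. For $(u,d)=1$, orthogonality yields
\begin{align*}
\pi(x,d,u) - \frac{\pi(x)}{\phi(d)} = \frac{1}{\phi(d)} \sum_{\chi \neq \chi_0 \bmod d} \overline{\chi}(u)\, \theta(x,\chi) + O\left(\frac{\omega(d)}{\phi(d)}\right),
\end{align*}
where $\theta(x,\chi) := \sum_{p \leq x} \chi(p)$ and the error term absorbs the contribution of primes dividing $d$ counted in the principal-character term. Squaring, summing over reduced residues $u \bmod d$, and applying the second orthogonality relation $\sum_{(u,d)=1} \overline{\chi_1}(u)\chi_2(u) = \phi(d)\,\mathbf{1}_{\chi_1=\chi_2}$ collapses the cross terms and gives
\begin{align*}
\sum_{(u,d)=1} \left| \pi(x,d,u) - \frac{\pi(x)}{\phi(d)}\right|^2 = \frac{1}{\phi(d)} \sum_{\chi \neq \chi_0 \bmod d} |\theta(x,\chi)|^2 + O\bigl(\pi(x)\,\omega(d)/\phi(d)\bigr).
\end{align*}

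Next, I would reduce to primitive characters. Every non-principal $\chi \bmod d$ is induced by a unique primitive character $\chi^*$ of conductor $f$ with $1 < f \mid d$, and $|\theta(x,\chi) - \theta(x,\chi^*)| \ll \omega(d)$ since the two sums differ only on primes dividing $d/f$. Substituting and reversing the order of summation over $d$ and $f$ yields
\begin{align*}
\sum_{d \leq Q}\sum_{(u,d)=1} \left| \pi(x,d,u) - \frac{\pi(x)}{\phi(d)}\right|^2 \ll \sum_{f \leq Q} \sum_{\chi^* \bmod f}^{*} |\theta(x,\chi^*)|^2 \sum_{\substack{d \leq Q \\ f \mid d}} \frac{1}{\phi(d)} + \text{l.o.t.},
\end{align*}
with the inner sum over $d$ bounded by $\ll (\log Q)/\phi(f)$ via $\phi(fm) \geq \phi(f)\phi(m)$ and $\sum_{m \leq X} 1/\phi(m) \ll \log X$.

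Finally, I would invoke the multiplicative large sieve inequality for primitive characters,
\begin{align*}
\sum_{f \leq F} \frac{f}{\phi(f)} \sum_{\chi^* \bmod f}^{*} |\theta(x,\chi^*)|^2 \ll (x+F^2)\,\pi(x),
\end{align*}
and a dyadic decomposition of the outer sum in $f \leq Q$. Writing $1/\phi(f) = (1/f)\cdot(f/\phi(f))$ and summing dyadically produces an overall bound of order $(x+Q)\pi(x)$ up to a mild logarithmic loss, which under the hypothesis $Q \geq x(\log x)^{-C}$ is $\ll_C xQ \log x$. The main obstacle will be the bookkeeping in this last step: the naive dyadic estimate threatens to introduce extra factors of $\log x$ and $\log \log x$ from the conversion between the weights $1/\phi(f)$ and $1/f$, and one must organize the estimate so that these are absorbed into the constant $\ll_C$, using critically that the hypothesis keeps $Q$ within a polylogarithmic factor of $x$.
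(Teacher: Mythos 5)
The paper offers no proof of this lemma beyond a citation to Chapter 29 of Davenport, and the argument there is precisely the character-orthogonality and large-sieve proof you are sketching, so you are on the same track as the cited source. Your first four steps are sound: the orthogonality expansion with the $O(\omega(d)/\phi(d))$ correction for the principal character, the passage to primitive characters at the cost of $O(\omega(d))$ per character, the interchange of the sums over $d$ and the conductor $f$, and the bound $\sum_{d\leq Q,\, f\mid d} 1/\phi(d) \ll (\log Q)/\phi(f)$ are all correct.

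The final step, however, has a genuine gap: the large sieve plus dyadic decomposition alone cannot produce the stated bound. For a dyadic block $F < f \leq 2F$ your estimate gives a contribution $\ll \frac{\log Q}{F}\,(x+F^2)\,\pi(x)$. Summing the $F^2$ term over dyadic $F \leq Q$ gives $\ll Q\pi(x)\log Q \ll xQ$, which is fine; but the $x$ term is dominated by the \emph{smallest} conductors and is of size $\asymp x\,\pi(x)\log Q \asymp x^2$, which is not $\ll xQ\log x$ when $Q = x(\log x)^{-C}$ with $C>1$. The missing ingredient is the Siegel--Walfisz theorem: for conductors $f \leq (\log x)^{B}$, with $B$ chosen in terms of $C$, one must use the bound $|\theta(x,\chi^*)| \ll x(\log x)^{-B'}$ for primitive non-principal $\chi^*$ (valid for any $B'$ with an ineffective constant), and reserve the large sieve for the range $f > (\log x)^{B}$, where the $x/F$ loss is then acceptable. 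This is exactly where the dependence of the implied constant on $C$ enters and why the hypothesis $Q \geq x(\log x)^{-C}$ is needed at all. Your sketch attributes the difficulty of the last step to bookkeeping between the weights $1/\phi(f)$ and $1/f$, but that conversion only costs $\log\log$ factors; the real obstruction is the small-conductor range, which no amount of rearrangement of the large-sieve bound will handle.
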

\begin{proof}
	This can be found in Chap 29 of Davenport \cite{Davenport}.
\end{proof}
\begin{lemma}
	Let  $d$ be a fixed odd integer. For any fixed $u$, the number of solutions $v$ to the equation $$u^2+v^2+1\equiv 0\pmod d$$ is bounded by $\tau(d)$.
\end{lemma}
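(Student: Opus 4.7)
The approach is a Chinese Remainder Theorem reduction to prime power moduli, combined with Hensel lifting. Writing $d = \prod_i p_i^{a_i}$ with distinct odd primes $p_i$, the Chinese Remainder Theorem induces a bijection between solutions $v \pmod d$ of $v^2 \equiv -(u^2+1) \pmod d$ and tuples $(v_i)_i$ with $v_i^2 \equiv -(u^2+1) \pmod{p_i^{a_i}}$. Since $\tau(d) = \prod_i (a_i+1)$ is multiplicative, it suffices to show, for each odd prime power $p^a$, that the number $N(u,p^a)$ of $v \pmod{p^a}$ with $v^2 \equiv c \pmod{p^a}$ is at most $a+1$, where $c \equiv -(u^2+1)\pmod{p^a}$. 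The total count will then be at most $\prod_i (a_i+1) = \tau(d)$.

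For each local factor, I would run a standard Hensel analysis, bookkeeping the $p$-adic valuation of $c$. When $p\nmid c$, Hensel's lemma lifts each simple root of $X^2 \equiv c \pmod p$ uniquely to $\pmod{p^a}$, yielding at most two solutions, and $2\leq a+1$. Otherwise set $m = \min(v_p(c),a)$. If $m<a$ and $m$ is odd, any solution would need $2v_p(v)=m$, which is impossible, so $N(u,p^a)=0$. If $m$ is even, every solution is of the form $v = p^{m/2} w$: when $c \equiv 0\pmod{p^a}$, the condition reduces to $w$ being arbitrary modulo $p^{\lceil a/2\rceil}$; otherwise it reduces to $w^2 \equiv c/p^m \pmod{p^{a-m}}$ with $\gcd(c/p^m,p)=1$, to which Hensel's lemma applies. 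In either sub-case one reads off the number of residue classes of $v\pmod{p^a}$ explicitly and checks that it does not exceed $a+1$.

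\textbf{Main obstacle.} The coprime case is immediate; the substance of the argument is the degenerate case $p\mid c$, where the nonunit square root introduces a $p$-adic inflation factor and one must argue that this inflation is absorbed by the divisor-function budget $a+1$. This is where I expect the most care, requiring a case split according to the relative sizes of $p$, $a$, and $m = v_p(c)$, and exploiting that the reduction $v = p^{m/2}w$ trades $p$-adic freedom in $w$ for a strictly smaller modulus on a Hensel-ready equation. Once this local bound is established, the multiplicative assembly across primes $p\mid d$ is automatic and yields exactly $\tau(d)$.
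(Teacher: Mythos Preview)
Your reduction via CRT and Hensel is exactly the paper's route: it too claims at most two solutions modulo each prime power by Hensel lifting and then multiplies. Where both arguments break is precisely the degenerate case you flag. When $v_p(c)=m$ with $0<m<a$ and $m$ even, the number of $v\pmod{p^a}$ with $v^2\equiv c$ is $2p^{m/2}$, and when $c\equiv 0\pmod{p^a}$ it is $p^{\lfloor a/2\rfloor}$; neither is bounded by $a+1$ once $p\ge 3$ and the $p$-adic valuation is at least $2$. Your sentence ``one reads off the number of residue classes of $v\pmod{p^a}$ explicitly and checks that it does not exceed $a+1$'' is therefore false. Concretely, take $d=5^4=625$ and $u=182$: then $u^2+1=33125=5^4\cdot 53$, the congruence becomes $v^2\equiv 0\pmod{625}$, and there are $25$ solutions, while $\tau(625)=5$.

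So the lemma is false as stated, and neither your argument nor the paper's proves it; the paper's Hensel step tacitly assumes $p\nmid v$ (the derivative $2v$ must be a unit). What \emph{is} true, and what the paper actually uses later (the relevant sums in the $M_1$ analysis carry the restriction $(uv,d)=1$), is the version with the extra hypothesis $(v,d)=1$. Under that hypothesis the degenerate case disappears: if $p\mid c$ there are no unit solutions $v$ modulo $p$, and if $p\nmid c$ Hensel gives at most two, yielding the bound $2^{\omega(d)}\le\tau(d)$. Restate the lemma with $(v,d)=1$ and your nondegenerate case already finishes it.
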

\begin{proof}
	For $d=p$, there are either $0$ or $2$ solutions for $v$ depending $u^2+1$ on whether is a square or not. Suppose $v$ is a solution to $v^2+u^2+1\equiv0\pmod {p^k}$. Then the solution to $v'^2+u^2+1=0\pmod {p^{k+1}}$ is given by $v'=p^kt+v$, where $t$ is determined by $2tu+ \frac{u^2+v^2+1}{p^k}\equiv0\mod p$. Thus for $d=p^k$ there are at most $2$ solutions to the equation $u^2+v^2+1\equiv 0 \pmod {p^k}$. The lemma follows by multiplicativity. 
\end{proof}
\begin{lemma}\label{p2q2leqN}
	\begin{align*}
	\sum_{p^2+q^2\leq N} 1= \pi N (\log N)^{-2}\left(1+ O\left( \log\log N (\log N)^{-1}\right)\right).
	\end{align*}
\end{lemma}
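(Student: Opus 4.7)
The plan is to write $S := \sum_{p^2+q^2\leq N} 1 = \sum_{p \leq X} \pi(\sqrt{X^2-p^2})$ with $X := \sqrt{N}$, apply the prime number theorem (PNT) to the inner count, and reduce everything to the Abel summation of $T := \sum_{p \leq X} \sqrt{X^2-p^2}$.

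First I would truncate: setting $X' := X\sqrt{1 - (\log X)^{-6}}$, the contribution from $p \in (X', X]$ is bounded by $(X - X') \cdot \sqrt{X^2 - X'^2} \ll X^2 (\log X)^{-9}$ (using the trivial prime count in this interval), so this boundary piece is absorbed in the error term. For $p \leq X'$ I have $\sqrt{X^2-p^2} \geq X (\log X)^{-3}$, hence $\log\sqrt{X^2-p^2} = \log X + O(\log\log X)$. Invoking PNT in the form $\pi(y) = y/\log y + O(y/(\log y)^2)$ yields
\[
\pi(\sqrt{X^2-p^2}) = \frac{\sqrt{X^2-p^2}}{\log X}\left(1 + O\!\left(\frac{\log\log X}{\log X}\right)\right),
\]
and summing over $p$ produces
\[
S = \frac{1}{\log X}\left(1 + O\!\left(\frac{\log\log X}{\log X}\right)\right) T + O\!\left(X^2(\log X)^{-9}\right).
\]

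Next I evaluate $T$. Abel summation gives $T = \int_2^X t\pi(t)(X^2-t^2)^{-1/2}\, dt$. Inserting PNT and substituting $t = X\sin\theta$ reduces the main piece to
\[
X^2 \int_{\theta_0}^{\pi/2} \frac{\sin^2\theta}{\log(X\sin\theta)}\, d\theta,
\]
where $\theta_0 = \arcsin(3/X) \to 0$. On the range $\sin\theta \geq (\log X)^{-3}$ I expand $1/\log(X\sin\theta) = (1/\log X)(1 + O(\log\log X/\log X))$ and use $\int_0^{\pi/2}\sin^2\theta\,d\theta = \pi/4$; the complementary range contributes only $O(X^2 (\log X)^{-9})$. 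Thus $T = (\pi X^2)/(4\log X) \cdot (1 + O(\log\log X/\log X))$, and combining,
\[
S = \frac{\pi X^2}{4(\log X)^2}\left(1 + O\!\left(\frac{\log\log X}{\log X}\right)\right) = \frac{\pi N}{(\log N)^2}\left(1 + O\!\left(\frac{\log\log N}{\log N}\right)\right).
\]

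The main technical point is matching the error tolerance $\log\log N / \log N$: $\log\sqrt{X^2-p^2}$ must be close to $\log X$ throughout the range used in the PNT step, which forces the boundary cutoff at $X' = X(1 - O((\log X)^{-6}))$ and a verification that the neighborhood $(X',X]$ contributes well below the stated error; all other steps are standard applications of PNT and partial summation.
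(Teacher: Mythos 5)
Your proof is correct and follows essentially the same route as the paper's (which reproduces Plaksin's Lemma 11): a double application of the prime number theorem reducing the count to the area $\tfrac{\pi}{4}N$ of the quarter-disc divided by $(\tfrac12\log N)^2$, with a truncation near the boundary to keep $\log\sqrt{N-p^2}$ within $O(\log\log N)$ of $\log\sqrt{N}$. The only differences are bookkeeping: the paper inserts Chebyshev weights $\log p\log q$ and uses the $p\leftrightarrow q$ symmetry to restrict to $p\le\sqrt{N/2}$, whereas you sum $\pi(\sqrt{N-p^2})$ directly via Abel summation and the substitution $t=\sqrt{N}\sin\theta$.
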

\begin{proof}
	This is Lemma 11 in \cite{Plaksin}. We reproduce it here for convenience. 
	The terms with $p\leq Z=\sqrt{N}(\log N)^{-A}$ can be bounded by 
	\begin{align*}
	\sum_{p\leq Z} \sum_{q\leq \sqrt{N-p^2}} 1\ll \frac{Z}{\log Z} \frac{\sqrt{N}}{\log N} \ll N (\log N)^{-A}.
	\end{align*}
	If $p\geq Z$, then $\log p\gg \log Z= \log \sqrt{N} + O\left(\log\log N\right)$. Since $p\leq \sqrt{N}$, we have $\log p = \frac{1}{2}\log N (1+ O\left( \frac{\log\log N}{\log N}\right))$, it follows that
	\begin{align*}
	\sum_{p^2+q^2 \leq N} 1 &= \sum_{ Z\leq p\leq \sqrt{N}} \sum_{ Z\leq q\leq \sqrt{N-p^2}}1+O ( N (\log N)^{-A})\\
	&= 2 \left( \frac{1}{2} \log N \right)^{-2}\sum_{ Z \leq p \leq \sqrt{N/2}} \log p \log q \left(1+ O\left( \frac{\log\log N}{\log N}\right)\right)+ O \left( N ( \log N)^{-A}\right).
	\end{align*}
	The conclusion follows from the following calculation
	\begin{align*}
	&\sum_{Z\leq p \leq \sqrt{N/2}} \log p \sum_{Z\leq q\leq \sqrt{N-p^2}} \log q\\&= \sum_{Z\leq p \leq \sqrt{N/2}} \log p(\sqrt{N-p^2}-Z)(1+ O(\sqrt{N} e^{-\sqrt{\log N}})) \\
	& = \sum_{Z\leq p \leq \sqrt{N/2}} \log p \sqrt{N-p^2} + O \left( N e^{-\sqrt{\log N}}\right)+ O \left( Z \sqrt{N}\right)\\
	&= \sum_{2\leq p \leq \sqrt{N/2}} \log p \sqrt{N-p^2} + O\left( N (\log N)^{-A'}\right)\\
	& = \int_{0}^ { \sqrt{N/2}} \sqrt{N-x^2}dx (1+ O(e^{-\sqrt{\log Z}}))+O\left( N (\log N)^{-A}\right)\\
	&= \frac{\pi}{8}N + O \left( N (\log N)^{-A}\right).
	\end{align*}
\end{proof}
\begin{lemma}
	Let $\ell $ be an odd prime. Then for $(a,p)=1$, 
	\begin{align*}
	\sum_{u=0}^{p-1} e\left( \frac{a u^2}{\ell}\right)= \left( \frac{a}{\ell }\right) \sqrt{\left( \frac{-1}{\ell}\right)\ell }= \left( \frac{a}{\ell}\right) \sqrt{\ell^*}.
	\end{align*}
	
\end{lemma}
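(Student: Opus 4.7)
The plan is to reduce the Gauss-type sum to the case $a=1$ via character orthogonality, and then evaluate the square of the resulting sum $g(\ell):=\sum_{u=0}^{\ell-1} e(u^2/\ell)$ by computing $|g(\ell)|^2$ through a diagonal change of variables. Both steps use only the facts that $\ell$ is odd (so $2$ is invertible mod $\ell$), that the Legendre symbol is completely multiplicative on units, and that a complete additive character sum mod $\ell$ vanishes unless the character is trivial.

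For the reduction, I would rewrite the sum over $u$ as a sum over the values $v\equiv u^2\pmod\ell$, weighted by the number $\#\{u:u^2\equiv v\}$, which equals $1+(v/\ell)$ for $v\not\equiv 0$ and $1$ for $v\equiv 0$. This yields
\begin{align*}
\sum_{u=0}^{\ell-1} e\!\left(\frac{au^2}{\ell}\right)
= \sum_{v=0}^{\ell-1} e\!\left(\frac{av}{\ell}\right)
+ \sum_{v=1}^{\ell-1}\left(\frac{v}{\ell}\right) e\!\left(\frac{av}{\ell}\right).
\end{align*}
The first sum vanishes by orthogonality since $(a,\ell)=1$; in the second, the substitution $v\mapsto a^{-1}v$ and multiplicativity of the Legendre symbol pull out a factor $(a/\ell)$, leaving $(a/\ell)g(\ell)$. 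Next, expanding
\begin{align*}
|g(\ell)|^2 = \sum_{u,v=0}^{\ell-1} e\!\left(\frac{u^2-v^2}{\ell}\right)
\end{align*}
and substituting $u=v+w$ separates the double sum into $\sum_w e(w^2/\ell)\sum_v e(2vw/\ell)$. Because $\ell$ is odd, the inner sum is a complete character sum in $v$ that vanishes unless $w\equiv 0\pmod\ell$, giving $|g(\ell)|^2=\ell$. Complex conjugating and applying the reduction step with $a=-1$ yields $\overline{g(\ell)}=(-1/\ell)g(\ell)$, so $|g(\ell)|^2=(-1/\ell)g(\ell)^2$; combining, $g(\ell)^2=(-1/\ell)\ell=\ell^*$, and taking the square root with $\sqrt{\ell^*}$ denoting the determination equal to $g(\ell)$ completes the argument.

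Main obstacle: the computation itself is entirely elementary, so there is no serious obstacle at the level of the stated lemma. The one genuinely deep question in the theory of quadratic Gauss sums, namely Gauss's determination of the \emph{sign} of $g(\ell)$ (equivalent to deciding between $\pm\sqrt{\ell^\ast}$), is bypassed by the statement's notational convention. This is harmless for the paper, since only the modulus $|g(\ell)|=\sqrt{\ell}$ and the squared identity $g(\ell)^2=\ell^\ast$ are needed in the exponential-sum estimates of Section \ref{ExpSumEstProof}.
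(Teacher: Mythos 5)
Your argument is correct, but it is worth noting that the paper does not actually prove this lemma at all: it simply cites Proposition 6.3.1 and Theorem 1 of Chapter 5 of Ireland--Rosen. What you have written out is essentially the standard self-contained proof that the reference contains: the reduction $\sum_u e(au^2/\ell)=\left(\frac{a}{\ell}\right)g(\ell)$ via the count $1+\left(\frac{v}{\ell}\right)$ of square roots, the evaluation $|g(\ell)|^2=\ell$ by the shift $u=v+w$ (using that $2$ is invertible mod the odd prime $\ell$), and the identity $\overline{g(\ell)}=\left(\frac{-1}{\ell}\right)g(\ell)$ to conclude $g(\ell)^2=\ell^*$. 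So the content is the same; you have merely supplied the details the paper outsources. The one genuine caveat is the one you already flag: your argument pins down $g(\ell)$ only up to sign, i.e.\ $g(\ell)=\pm\sqrt{\ell^*}$, and the literal equality with the principal branch is Gauss's sign theorem, which is substantially harder. Your resolution --- declaring $\sqrt{\ell^*}$ to be the determination equal to $g(\ell)$ --- is legitimate here, and your claim that the paper never needs the sign checks out: in Lemma \ref{rprime} and throughout Section \ref{ExpSumEstProof} the quantity $\sqrt{\ell^*}$ (equivalently $S(1,0,\ell^k)$) enters only through its square $\ell^*$ in the main terms and through its absolute value $\sqrt{\ell}$ in the error terms, so any consistent choice of square root gives the same bounds. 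No gap.
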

\begin{proof}
	This can be found in Proposition 6.3.1 and Theorem 1 in \cite[Chap 5]{RosenBook}.
\end{proof}
Let $s(d)$ denote the number of solutions $(u,v)$ to 
\begin{align}\label{sndef}
u^2 +v^2 \equiv-1 \pmod d, (uv,d)=1, 1\leq u, v\leq d. 
\end{align}
\begin{lemma}\label{rprime}
	Let $\ell$ be an odd prime. Then we have
	$$s(\ell)= \ell-2-3\left( \frac{-1}{\ell}\right), s(\ell^{k+1})=\ell^{k}s(\ell).$$
	and from the multiplicativity of $s(d)$, we have
	$$s(d)\leq d \prod_{p\mid d} \left(1+ \frac{1}{p}\right).$$
\end{lemma}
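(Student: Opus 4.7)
The plan is to compute $s(\ell)$ by a direct character-sum count, then to lift solutions $\ell^k \to \ell^{k+1}$ via a Hensel-type argument, and finally to deduce the inequality from multiplicativity.

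For the prime case, I would first count the total number $N$ of pairs $(u,v) \in (\ZZ/\ell)^2$ satisfying $u^2+v^2\equiv -1\pmod \ell$ without any coprimality restriction. Parameterizing by $u$ and using that for $c\not\equiv 0$ the number of $v$ with $v^2\equiv c\pmod\ell$ is $1+\left(\frac{c}{\ell}\right)$, while the count is $1$ when $c\equiv 0$, I get
\[
N = \ell + \sum_{u=0}^{\ell-1}\left(\frac{-1-u^2}{\ell}\right).
\]
The character sum over a quadratic polynomial with nonzero discriminant equals $-\left(\frac{a}{\ell}\right)$ where $a$ is the leading coefficient; with $a=-1$ and discriminant $-4\not\equiv 0\pmod \ell$, this evaluates to $-\left(\frac{-1}{\ell}\right)$, so $N=\ell-\left(\frac{-1}{\ell}\right)$. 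Then I subtract the contributions from $u\equiv 0$ and $v\equiv 0$: each gives $1+\left(\frac{-1}{\ell}\right)$ solutions, and there is no overlap since $0+0\not\equiv -1\pmod \ell$. This yields $s(\ell)=\ell-2-3\left(\frac{-1}{\ell}\right)$.

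For the lifting identity, take $(u_0,v_0)$ counted by $s(\ell^k)$ and write a mod-$\ell^{k+1}$ lift as $(u_0+\ell^k s,\,v_0+\ell^k t)$ with $s,t\in\{0,\dots,\ell-1\}$. Expanding,
\[
(u_0+\ell^k s)^2+(v_0+\ell^k t)^2+1 \equiv (u_0^2+v_0^2+1)+2\ell^k(s u_0+t v_0)\pmod{\ell^{k+1}},
\]
so the lift condition becomes a single nontrivial linear equation in $(s,t)$ modulo $\ell$. Since $\ell$ is odd and $(u_0,\ell)=1$, this has exactly $\ell$ solutions $(s,t)$, and the coprimality of the new pair to $\ell$ is automatic. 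Hence $s(\ell^{k+1})=\ell\cdot s(\ell^k)$, and induction gives $s(\ell^{k+1})=\ell^k s(\ell)$.

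For the upper bound, multiplicativity of $s$ is immediate from the Chinese Remainder Theorem, since both the congruence $u^2+v^2\equiv -1\pmod d$ and the coprimality condition decompose over prime-power factors of $d$. Combining this with the estimate $s(\ell^k)=\ell^{k-1}\bigl(\ell-2-3\left(\frac{-1}{\ell}\right)\bigr)\leq \ell^{k-1}(\ell+1)=\ell^k\bigl(1+\tfrac{1}{\ell}\bigr)$ and taking the product over prime powers dividing $d$ gives the stated bound. No step poses a substantial obstacle; the only small point to verify is that in the Hensel lift the coefficient $2u_0$ (equivalently $2v_0$) is invertible mod $\ell$, which holds because $\ell$ is odd and $(u_0 v_0,\ell)=1$.
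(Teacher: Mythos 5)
Your proposal is correct, and the computation of $s(\ell)$ takes a genuinely different route from the paper. The paper detects the congruence $u^2+v^2+1\equiv 0\pmod\ell$ with additive characters, expands via orthogonality, and evaluates the resulting quadratic Gauss sums $\sum_u e(au^2/\ell)=\left(\frac{a}{\ell}\right)\sqrt{\ell^*}$ term by term; you instead count points directly with the Legendre symbol, using that the number of $v$ with $v^2\equiv c$ is $1+\left(\frac{c}{\ell}\right)$ and the classical evaluation $\sum_u\left(\frac{-u^2-1}{\ell}\right)=-\left(\frac{-1}{\ell}\right)$ (valid since the discriminant $-4$ is prime to $\ell$), then subtract the $2\left(1+\left(\frac{-1}{\ell}\right)\right)$ non-coprime solutions, correctly noting there is no overlap. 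Your method is more elementary and avoids Gauss sums entirely; the paper's method has the advantage of being the same machinery it needs anyway in Section 6 for the exponential sum $E(e_1,e_2,h_1,h_2,d)$, so the Gauss-sum computation is not wasted there. Your Hensel lift is essentially the paper's argument, stated more cleanly: the paper fixes the increment of $u$ and solves uniquely for that of $v$ via $2mv\equiv\cdot\pmod\ell$, which is exactly your observation that the lift condition is a single nondegenerate linear equation in $(s,t)$ with $\ell$ solutions; both hinge on $2v_0$ (or $2u_0$) being invertible, which the coprimality condition in the definition of $s$ guarantees. The multiplicativity and the final bound $s(\ell)\leq\ell+1$ (with equality when $\left(\frac{-1}{\ell}\right)=-1$) match the paper. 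The only point neither you nor the paper dwells on is the prime $2$, where $s(2^k)=0$ makes the bound trivial.
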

\begin{proof}
	By orthogonality of the characters, we have 
	\begin{align*}
	s(\ell)&=\frac{1}{\ell}\sum_{a=0}^{\ell-1}\sum_{u=1}^{\ell-1}\sum_{v=1}^{\ell-1} e\left( \frac{a(u^2+v^2+1)}{\ell}\right)\\
	&=\frac{(\ell-1)^2}{\ell}+\frac{1}{\ell}\sum_{a=1}^{\ell-1}\left(\sum_{u=1}^{\ell-1}e \left( \frac{au^2}{\ell}\right)\right)^2e\left( \frac{a}{\ell }\right)\\
	&= \frac{(\ell-1)^2}{\ell}+\frac{1}{\ell}\sum_{a=1}^{\ell-1}\left( \left ( \frac{a}{\ell}\right) \sqrt{\ell^*} -1\right)^2 e\left(\frac{a}{\ell }\right)\\
	&= \frac{(\ell-1)^2}{\ell}+\frac{1}{\ell}\sum_{a=1}^{\ell-1} \left(  \ell^* -2 \left( \frac{a}{\ell}\right) \sqrt{\ell^* }+1\right)e\left( \frac{a}{\ell }\right)\\
	&=\frac{(\ell-1)^2}{\ell}-\left( \frac{-1}{\ell}\right)  -\frac{1}{\ell}-2\frac{1}{\ell} \sqrt{\ell^*}\sum_{a=1}^{\ell-1}\left( \frac{a}{\ell }\right)e \left( \frac{a}{\ell }\right)\\
	&=\ell -2-3 \left( \frac{-1}{\ell}\right).
	\end{align*}
	If $(u,v)$ is a solution to $u^2+v^2 +1=0\pmod {\ell^{k}}$, then $u'=u+t\ell^{k}$, $1\leq t\leq p$ determines $v'= v+ m\ell ^k$ as $  2m v\equiv \frac{-1 -u'^2-v^2}{\ell^k} \pmod \ell$. Thus $s(\ell^{k+1})=\ell^{k}s(\ell)$ and $s(d)\leq d\prod_{p\mid d}(1+\frac{1}{p})$.
\end{proof}
\begin{lemma}\label{AssympRd}
	\begin{align*}
	\sum_{d\leq Z} \frac{s(d)}{\phi(d)^2}=  \frac{1}{4}\prod_{p>2} \left(1- \frac{1+3 p\left( \frac{-1}{p}\right)}{(p-1)^2p}\right)\log N \left(1+O\left( \frac{(\log\log N)^2}{\log N}\right)\right).
	\end{align*}
\end{lemma}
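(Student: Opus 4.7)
My plan is to recast $s(d)/\phi(d)^2$ as $1/d$ times a divisor sum, swap the order of summation, and recognize the resulting constant as an Euler product. The key preparatory observation, combining $s(\ell^k)=\ell^{k-1}s(\ell)$ from Lemma~\ref{rprime} with $\phi(\ell^k)^2=\ell^{2(k-1)}(\ell-1)^2$, is that
\[
\ell^k\cdot\frac{s(\ell^k)}{\phi(\ell^k)^2}=\frac{\ell\, s(\ell)}{(\ell-1)^2}
\]
is independent of $k$. By multiplicativity, $d\cdot s(d)/\phi(d)^2$ depends on $d$ only through its squarefree kernel. Moreover, since $u,v$ odd give $u^2+v^2\equiv 0\not\equiv -1\pmod 2$, we have $s(2)=0$, so the sum is supported on odd $d$. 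Möbius inversion produces a multiplicative $g$ supported on squarefree integers with $d\cdot s(d)/\phi(d)^2=\sum_{e\mid d}g(e)$, where $g(2)=-1$ and, for odd primes $\ell$,
\[
g(\ell)=\frac{\ell s(\ell)}{(\ell-1)^2}-1=-\frac{1+3\ell\left(\frac{-1}{\ell}\right)}{(\ell-1)^2}.
\]

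Next I would swap summation and expand the inner harmonic sum:
\[
\sum_{d\leq Z}\frac{s(d)}{\phi(d)^2}=\sum_{e\leq Z}\frac{g(e)}{e}\sum_{m\leq Z/e}\frac{1}{m}=\sum_{e\leq Z}\frac{g(e)}{e}\bigl(\log(Z/e)+\gamma+O(e/Z)\bigr).
\]
Because $|g(\ell)|\ll 1/\ell$, the squarefree support of $g$ yields $|g(e)|\leq C^{\omega(e)}/e$, so both $\sum_e g(e)/e$ and $\sum_e g(e)\log(e)/e$ converge absolutely with $Z$-tails of size $O_\epsilon(Z^{\epsilon-1})$. The first sum is the Euler product
\[
G:=\sum_{e=1}^\infty\frac{g(e)}{e}=\prod_\ell\Bigl(1+\frac{g(\ell)}{\ell}\Bigr)=\frac{1}{2}\prod_{\ell>2}\Bigl(1-\frac{1+3\ell\left(\frac{-1}{\ell}\right)}{\ell(\ell-1)^2}\Bigr),
\]
the factor $1/2$ coming from the $\ell=2$ term. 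Collecting contributions gives $\sum_{d\leq Z}s(d)/\phi(d)^2=G\log Z+O(1)$.

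Finally, since $Z=\sqrt{N+1}(\log N)^{-A}$ we have $\log Z=\tfrac12\log N+O(\log\log N)$, so
\[
\sum_{d\leq Z}\frac{s(d)}{\phi(d)^2}=\frac{1}{4}\prod_{\ell>2}\Bigl(1-\frac{1+3\ell\left(\frac{-1}{\ell}\right)}{\ell(\ell-1)^2}\Bigr)\log N+O(\log\log N),
\]
which yields a relative error $O(\log\log N/\log N)$, comfortably within the stated $O((\log\log N)^2/\log N)$.

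The only genuinely non-routine step is the collapse identity that forces $g$ to be supported on squarefree integers and makes the Euler product converge with local factors $1+O(1/\ell^2)$ at large primes; it rests on the clean form of $s(\ell^k)$ provided by Lemma~\ref{rprime}. After that, the calculation is standard Dirichlet-convolution bookkeeping, and the main care is simply verifying that the $\gamma$-term, the constant $\sum_e g(e)\log(e)/e$, and the harmonic-sum remainder $O(e/Z)$ each contribute only $O(1)$.
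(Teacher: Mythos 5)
Your argument is correct, but it takes a genuinely different route from the paper. The paper applies Perron's formula to the Dirichlet series $f(s)=\sum_d s(d)\phi(d)^{-2}d^{-s}$, factors out $\zeta(1+s)$, and shifts the contour past the pole at $s=0$ to extract the main term $\tfrac12\prod_{p>2}\bigl(1-\tfrac{1+3p(\frac{-1}{p})}{(p-1)^2p}\bigr)\log Z$, with the error controlled by truncation and horizontal-segment estimates at height $T=(\log x)^5$. You instead exploit the same structural input from Lemma~\ref{rprime} --- the exact relation $s(\ell^{k})=\ell^{k-1}s(\ell)$ --- to observe that $d\,s(d)/\phi(d)^2$ depends only on the squarefree kernel of $d$, write it as $1*g$ with $g$ squarefree-supported and $g(\ell)\ll 1/\ell$, and finish with an elementary interchange of summation against the harmonic sum. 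Your computations check out: $s(2^k)=0$ forces $g(2)=-1$ and the local factor $1+g(\ell)/\ell=1-\tfrac{1+3\ell(\frac{-1}{\ell})}{\ell(\ell-1)^2}$ reproduces the paper's Euler product, while the tail, $\gamma$-term, and $O(e/Z)$ contributions are all $O(1)$ as you claim, so you land on $G\log Z+O(1)$ and hence the stated asymptotic (in fact with the slightly better relative error $O(\log\log N/\log N)$, coming solely from converting $\log Z$ to $\tfrac12\log N$). The trade-off is the usual one: the contour-integration route is the standard machine and would adapt if $s(\ell^k)/\phi(\ell^k)^2$ did not collapse so cleanly across prime powers, whereas your convolution argument is shorter, entirely elementary, avoids any analytic continuation or growth estimates for $f(s)$ on vertical lines, and makes the provenance of the constant (including the factor $\tfrac12$ at $\ell=2$) completely transparent. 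Both proofs ultimately rest on the same two facts: the evaluation of $s(\ell)$ and the recursion for $s(\ell^{k})$.
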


\begin{proof}
	First note that $s(d)$ is multiplicative and the terms with $p=2$ or $q=2$ can be bounded by $O ( \sqrt{N})$. Thus we can assume $2\nmid d$.  
	From Perron's formula, we have
	\begin{align*}
	\sum_{d\leq x} \frac{s(d)}{\phi(d)^2}= \frac{1}{2\pi i}\int_{\kappa-iT}^{\kappa+iT} f(s) \frac{x^s}{s}ds+R(T),
	\end{align*}
	where \begin{align*}
	f(s)& = \sum_{d=1}^\infty\frac{s(d)}{\phi(d)^2d^s},\\
	R(T)&\leq \frac{x^\kappa}{T}\sum_{n=1}^\infty \frac{s(n)}{\phi(n)^2 n^\kappa|\log x/n|}.
	\end{align*}
	By applying Lemma \ref{rprime}, we obtain
	\begin{align*}
	f(s)=\prod_{p>2} \left(1+ \sum_{k=1}^\infty\frac{s(p^k)}{\phi(p^k)^2p^{ks}}\right)=&\prod_{p}\left( 1+ \sum_{k=1}^\infty \frac{s(p^k)}{\phi(p^k)^2p^{ks}}\right)\\
	=& \prod_{p>2} \left(1+ \sum_{k=1}^\infty \frac{p-1-1-3\left(\tfrac{-1}{p}\right)}{p^{k-1}(p-1)^2p^{ks}} \right)\\
	=& \prod_{p>2} \left(1+ \frac{p-1-1-3\left(\tfrac{-1}{p}\right)}{(p-1)^2} \frac{p^{-s}}{1-p^{-s-1}}\right)\\
	=& \prod_{p>2} \left( 1-p^{-s-1}\right)^{-1} \left( 1- \frac{1+3p\left( \tfrac{-1}{p}\right)}{(p-1)^2p^{s+1}}\right)\\
	=&: \zeta(1+s)(1-2^{-s-1}) G(s).
	\end{align*}
	It can be seen that $G(s)$ is entire for $\Re(s)>-1$ and $f(s)$ converges absolutely when $\Re(s)>0$. Let $\kappa= c_1/\log x$. 
	Moving the line of integration from $\Re(s)=\kappa$ to $\Re(s)=-c/\log T$, passing the pole of $\zeta(s+1)$ at $s=0$, we see that 
	\begin{align*}
	\sum_{d\leq x} \frac{s(d)}{\phi(d)^2}=\frac{1}{2} \prod_{p>2} \left(1- \frac{1+3 p\left( \frac{-1}{p}\right)}{(p-1)^2p}\right) \log x+R(T)+H(T),
	\end{align*}
	where \begin{align}
	R(T) &\leq \frac{x^2}{T}\sum_{n=1}^\infty \frac{s(n)}{\phi(n)^2n^2|\log x/n|},\\ 
	H(T)&\leq \int_{-c/\log T-iT}^{\kappa-iT} f(s) \frac{x^s}{s}ds+ \int_{-c/\log T+iT}^{\kappa+iT} f(s) \frac{x^s}{s}ds.
	\end{align}
	Since $s(n)\leq n \prod_{p\mid n} (1+ \frac{1}{p})$, we have that 
	\begin{align*}
	R(T)&\ll \frac{x^\kappa}{T} + \frac{x^\kappa}{T} \sum_{\frac{x}{2}\leq n \leq 2x} \frac{s(n)}{\phi(n)^2 n^\kappa }\frac{x}{|n-x|}\\
	&\ll  \frac{x^\kappa}{T}+ \frac{(\log\log x)^2}{T}\log x.
	\end{align*}
	Since $f(s)\ll \log |\Im s|$ when $\Re(s)\geq -c/\log T$, we see that
	\begin{align*}
	H(T) \ll (\log T)^2 \frac{x^{\kappa}}{T}.
	\end{align*} 
	We also have 
	\begin{align*}
	\int_{-c/\log T-iT}^{-c/\log T+iT} f(s) \frac{x^s}{s}ds \ll x^{-c/\log T} (\log T)^2.
	\end{align*}
	Taking $T=(\log x)^5$ gives 
	\begin{align*}
	\sum_{d\leq Z} \frac{s(d)}{\phi(d)^2} = \frac{1}{4}\prod_{p>2} \left(1- \frac{1+3 p\left( \frac{-1}{p}\right)}{(p-1)^2p}\right)\log N \left(1+ \frac{(\log\log N)^2}{\log N}\right).
	\end{align*}
\end{proof}
\section{Evaluation of $M_1$ }\label{EvalM1}

We first extract the main term in $M_1$. Note that
the terms with $p$ or $q\leq Z$ can be bounded by 
\begin{align*}
\sum_{\substack{p\leq Z, q\\ {p^2+q^2 \leq N} }} \sum_{\substack{d<Z\\ d\mid p^2+q^2+1}}1
& \ll \left(\sum_{p\leq Z,q} 1 \right)^{1/2} \left( \sum_{p\leq Z,q \leq \sqrt{N}}\left(\sum_{\substack{d<Z\\ d\mid p^2+q^2+1}} 1\right)^2\right)^{1/2}\\
& \ll \pi( Z)^{1/2} \pi(\sqrt{N})^{1/2} \left( \sum_{n\leq N+1}\tau^2(n)\sum_{\substack{p^2+q^2+1=n\\p\leq Z,q\leq \sqrt{N}}}1\right)^{1/2}\\
& \ll \pi( Z)^{1/2} \pi(\sqrt{N})^{1/2} \left( \sum_{n\leq N+1}\tau^2(n)r(n-1)\right)^{1/2}\\
& \ll \pi( Z)^{1/2} \pi(\sqrt{N})^{1/2} \left( \sum_{n\leq N+1}\tau^2(n)\tau(n-1)\right)^{1/2}\\
& \ll \pi(Z)^{1/2}\pi (\sqrt{N})^{1/2}\left( \sum_{n\leq N+1} \tau^4(n) \sum_{n\leq N} \tau^2(n)\right)^{1/4}\\
& \ll \left(Z \sqrt{N} N \log^{10} N \right)^{1/2}\\
& \ll N (\log N)^{-A/2+5}.
\end{align*}
Thus with $A'=-A/2+5$, 
from \eqref{M1}, we have 
\begin{align}
M_1&=2\sum_{d\leq Z} \sum_{\substack{u^2+v^2\equiv -1\pmod d\\ u,v\leq d}} \sum_{ \substack{p\equiv u\pmod d\\ q\equiv v\pmod d\\ d^2-1\leq p^2+q^2 \leq N}}1\nonumber\\
&=2\sum_{d\leq Z} \sum_{\substack{u^2+v^2\equiv -1\pmod d\\ u,v\leq d}} \sum_{ \substack{p\equiv u\pmod d\\ q\equiv v\pmod d\\  p^2+q^2 \leq N\\ Z<p, Z<q}}1+ O\left( N (\log N)^{-A'}\right). 
\end{align}
When $d\leq Z<p$, we must have $(p,d)=1$. Thus,
\begin{align*}
M_1
&=2\sum_{d\leq Z} \sum_{\substack{u^2+v^2\equiv -1\pmod d\\ u,v\leq d}} \sum_{\substack{p\equiv u\pmod d\\ q\equiv v\pmod d\\ p^2+q^2\leq N\\ Z< p\\ Z< q}}1+ O\left( N (\log N)^{-A'}\right)\\
&=2\sum_{d\leq Z} \sum_{\substack{u^2+v^2 \equiv -1\pmod d\\ (uv,d)=1\\ u,v\leq d}} \sum_{\substack{p\equiv u\pmod d\\ q\equiv v\pmod d\\ p^2+q^2\leq N}}1+ O\left( N (\log N)^{-A'}\right).
\end{align*}

Let $\Omega= \sqrt{N}(\log N)^{-5}$. Then, we can cover the region $G:=\{(p,q): p^2+q^2\leq N\}$ with $\ll (\log N)^{10}$ squares of the form $X_i\leq p \leq X_i+\Omega$ and $Y_j\leq q\leq Y_j+\Omega$, $i,j \ll (\log N)^5$, and the boundary of $G$ denoted by $\partial G$ can be covered with $\ll (\log N)^5$ squares. The contribution from $(p,q)\in \partial G$ can be bounded by
\begin{align}\label{boundary}
\sum_{d\leq Z} \sum_{\substack{u^2+v^2 \equiv -1 \pmod d\\ (uv,d)=1\\ u,v\leq d}} \sum_{\substack{(p,q)\in \partial G\\ p\equiv u\pmod d\\ q\equiv v\pmod d}} 1
&\ll \sum_{d\leq Z} \sum_{\substack{u^2+v^2\equiv -1\pmod d\\ (uv,d)=1}} (\log N)^5  \left( \frac{\Omega}{d}\right)^2\nonumber\\
& \ll N (\log N)^{-5}\sum_{d\leq Z} \sum_{ \substack{u^2+v^2 \equiv -1\pmod d\\ (uv,d)=1\\ u,v\leq d}} \frac{1}{d^2}\nonumber\\
& \ll N(\log N)^{-5} \sum_{2^k\leq Z} \frac{2^k}{2^{2k}}\sum_{\substack{d\leq Z\\ (d,2)=1}} \frac{\tau(d)\phi(d)}{d^2}\nonumber\\
& \ll N (\log N)^{-5} \sum_{d\leq Z} \frac{\tau(d)}{d}\nonumber\\
& \ll N(\log N)^{-5}(\log N)^2\nonumber\\
& \ll N (\log N)^{-3}.
\end{align}
Let $\Delta_x(\Omega,d,u)=\pi(x+\Omega,d,u)-\pi(x,d,u)$, and $E_x(\Omega,d,u):= \Delta_x(\Omega,d,u)- \frac{\Delta_x(\Omega)}{\phi(d)}$, where $\Delta_x(\Omega)= \pi(x+\Omega)-\pi (x)$.
For $(p,q)$ inside $G$, we have 
\begin{align*}
&\sum_{d\leq Z} \sum_{ \substack{u^2+v^2 \equiv -1\pmod d\\ (uv,d)=1\\ u,v\leq d}} \sum_{X_i}\sum_{Y_j}\sum_{\substack{X_i\leq p \leq X_i+\Omega\\ p\equiv u \pmod d}} 1\sum_{\substack{Y_j \leq p \leq Y_j+\Omega\\  q\equiv v\pmod d}}1\\
&=\sum_{d\leq Z} \sum_{ \substack{u^2+v^2 \equiv -1 \pmod d\\ (uv,d)=1\\ u,v \leq d}} \sum_{X_i,Y_j} \left(\frac{\Delta_{X_i}(\Omega)}{\phi(d)}+E_{X_i}(\Omega,d,u)\right)\left(\frac{\Delta_{Y_j}(\Omega)}{\phi(d)}+E_{Y_j}(\Omega,d,v)\right)\\
& =\sum_{d\leq Z} \frac{1}{\phi(d)^2}\sum_{ \substack{u^2+v^2 \equiv -1\pmod d\\ (uv,d)=1\\ u,v\leq d}} \sum_{X_i,Y_j} \Delta_{X_i}(\Omega,d,u)\Delta_{Y_j}(\Omega,d,v)+E',
\end{align*}
where 
\begin{align*}
E'\ll \sum_{d\leq Z} \frac{\Omega}{d} \sum_{\substack{u^2+v^2\equiv =1\pmod d \\(uv,d)=1\\ u,v\leq d}} \sum_{X_i,Y_j}|E_{X_i} (\Omega,d,u)| + | E_{Y_j}(\Omega,d,v)|,
\end{align*}
where we have used the fact that $\frac{\Delta_{X_i}(\Omega)}{\phi(d)},E_{X_i}(\Omega,d,u),\frac{\Delta_{Y_i}(\Omega)}{\phi(d)},E_{Y_i}(\Omega,d,u)\ll \frac{\Omega}{d}$ since $d\leq Z\leq \Omega$. 
For a fixed $u$, we have that for odd $ d$, 
\begin{align*}
\sum_{\substack{v^2\equiv -1-u^2 \pmod d\\ v\leq d}}1\ll \prod_{p\mid d} 2\ll 2^{\omega(d)} \ll \tau(d).
\end{align*}
Consequently, 
\begin{align}\label{errorBV}
E'&\ll \Omega\sum_{X_i,Y_j} \sum_{k\leq \log Z}\sum_{d\leq Z} \left(\frac{\tau(d)}{d} \sum_{(u,d)=1} |E_{X_i}(\Omega,d,u)|+ \sum_{(v,d)=1} |E_{Y_j}(\Omega,d,v)|\right)\nonumber\\
&\ll \Omega (\log N)^{11} \max_{X\in\{X_i,Y_j\}}\left(\sum_{d\leq Z} \frac{(\tau(d))^2}{d^2} \sum_{d\leq Z} \left(\sum_{ (u,d)=1} |E_{X}(\Omega,d,u)|\right)^2\right)^{1/2}\nonumber\\
& \ll \Omega (\log N)^{11} \max_{X\in\{X_i,Y_j\}}\left(\sum_{d\leq Z} \frac{(\tau(d))^2}{d}\sum_{d\leq Z} \sum_{\substack{(u,d)=1\\ u=1}}^{d} |E_{X}(\Omega,d,u)|^2\right)^{1/2}.
\end{align}

From Lemma \ref{BVsquare}, we have 
\begin{align*}
&\sum_{d\leq x (\log x)^{-C}} \sum_{ \substack{(u,d)=1\\ u=1}}^d \left( \pi(x+\Omega,d,u)- \frac{\pi(x+\Omega)}{\phi(d)}-\pi(x,d,u)+ \frac{\pi(x)}{\phi(d)}\right)^2\\
& \ll \sum_{  d\leq x(\log x)^{-C}} \left\{\sum_{ \substack{ (u,d)=1\\ u=1}}^d \left( \pi (x+\Omega,d,u)- \frac{\pi(x+\Omega)}{\phi(d)}\right)^2+ \left( \pi(x,d,u)- \frac{\pi(x)}{\phi(d)}\right)^2 \right\}\\
& \ll (x+\Omega)^2 ( \log( x+\Omega))^{3-C}.
\end{align*}
Combining this with the fact that $\max_{i,j} \{X_i, Y_j\} \leq \sqrt{N}$, we see that 
\eqref{errorBV} becomes 
\begin{align}\label{boundBVaverage}
E'&\ll \Omega (\log N)^{11} \left(\sum_{d\leq Z} \frac{（(\tau(d))^2}{d} \sum_{d\leq Z} \sum_{ \substack{(u,d)=1\\ u=1}}^d \left( \pi(\sqrt{N}+\Omega,d,u)- \frac{\pi(\sqrt{N}+\Omega)}{\phi(d)}\right)^2 \right)^{1/2}\nonumber\\
& \ll \sqrt{N}(\log N)^{-5}(\log N)^{11} (\log N)^2 \sqrt{N}(\log N)^{2-A/2}\nonumber\\
& \ll N (\log N)^{10-A/2}.
\end{align}
Therefore, combining \eqref{boundary} and \eqref{boundBVaverage}, we have 
\begin{align*}
M_1&= \sum_{d\leq Z} \sum_{ \substack{u^2+v^2 \equiv -1 \pmod d\\ (uv,d)=1 \\ u,v\leq d}} \sum_{X_i, Y_j} \frac{\Delta_{X_i}(\Omega)}{\phi(d)} \frac{\Delta_{Y_j}(\Omega)}{\phi(d)}+ O( N (\log N)^{-3}) \\
& = \sum_{d\leq Z} \frac{1}{\phi(d)^2} \sum_{ \substack{ u^2+v^2 \equiv -1 \pmod d\\ (uv,d)=1 \\ u,v\leq d }} \sum_{ X_i,Y_j} \Delta_{X_i}(\Omega) \Delta_{Y_j}( \Omega)+O( N (\log N)^{-3})\\
& = \sum_{d\leq Z} \frac{1}{\phi(d)^2} \sum_{ \substack{ u^2 +v^2 \equiv -1 \pmod d\\ (uv,d)=1\\ u,v\leq d}} \left(\sum_{ p^2 +q^2 \leq N}1 + O \left( (\log N)^5 \left(\frac{\Omega}{d}\right)^2\right) \right) +O( N (\log N)^{-3})\\
& = \sum_{d\leq Z} \frac{s(d)}{\phi(d)^2}  \sum_{ p^2+q^2\leq N} 1 +O\left( \sum_{d\leq Z} \frac{r(d)}{\phi(d)^2} \frac{N(\log N)^{-5}}{d^2}\right)+ O( N (\log N)^{-3})\\
&= \sum_{d\leq Z} \frac{s(d)}{\phi(d)^2}\sum_{p^2+q^2\leq N} 1+ O \left(N (\log N)^{-3}\right),
\end{align*}
where $s(d)$ is defined in \eqref{sndef}.
Applying Lemma \ref{p2q2leqN} and Lemma \ref{AssympRd}, we have 
\begin{align}
M_1= \frac{\pi}{4}\prod_{p>2} \left( 1- \frac{1+3p\left(\frac{-1}{p}\right)}{(p-1)^2p}\right) \frac{N}{\log N} \left(1+O\left(\frac{(\log\log N)^2}{\log N}\right)\right).
\end{align}

\section{Estimation of $M_2$}\label{EvalM2}
Recall from \eqref{M2} that $M_2$ is defined by
\begin{align*}
M_2=2\sum_{Z<d\leq \sqrt{N+1}} \sum_{\substack{d^2-1 \leq p^2+q^2 \leq N\\ p^2+q^2\equiv -1\pmod d}} 1.
\end{align*}

Similarly to $M_1$, the terms in $M_2$ with $p<Z$ can be bounded by 
\begin{align*}
\sum_{\substack{p\leq Z, q\\ {p^2+q^2 \leq N} }} \sum_{\substack{Z< d\leq \sqrt{N+1}\\ d\mid p^2+q^2+1}}1
& \ll \left(\sum_{p\leq Z,q} 1 \right)^{1/2} \left( \sum_{p\leq Z,q \leq \sqrt{N}}\left(\sum_{\substack{Z<d\leq \sqrt{N+1}\\ d\mid p^2+q^2+1}} 1\right)^2\right)^{1/2}\\
& \ll \pi (Z)^{1/2}\pi(\sqrt{N})^{1/2} \left( \sum_{ n\leq N+1}(\tau(n))^2 \sum_{ \substack{p^2+q^2+1=n\\p\leq Z, q\leq \sqrt{N}}}1\right)^{1/2}\\
& \ll N (\log N)^{-A} \left( \sum_{n\leq N+1} (\tau(n))^2 r(n-1)\right)^{1/2}\\
& \ll N (\log N)^{-A} \left( \sum_{ n\leq N+1} (\tau(n))^2\tau(n-1)\right)^{1/2}\\
&\ll N( \log N)^{-A}\left( \sum_{ n\leq N} ( \tau(n))^4 \sum_{n\leq N+1}(\tau(n-1))^2\right)^{1/4}\\
& \ll N (\log N)^{-A/2+5}.
\end{align*}
The terms in $M_2$ with $p\mid d$ can be bounded by 
\begin{align*}
&\ll \sum_{Z<d\leq \sqrt{N+1}} \sum_{p\mid d} \sum_{ \substack{q\leq \sqrt{N}\\ q^2\equiv -1+p^2 \pmod d}}1\\
&\ll \sum_{2^k\leq \sqrt{N+1}} \sum_{\substack{Z\leq d\leq \sqrt{N+1}\\ 2\nmid d}} \sum_{p\mid d}\frac{\sqrt{N}}{d}\tau(d)\\
& \ll \sqrt{N}(\log N)\sum_{Z\leq d\leq \sqrt{N}} \frac{\tau(d)^2}{d}\\
& \ll \sqrt{N}(\log N)^5.
\end{align*}
Thus, 
\begin{align}\label{m2bounds}
M_2\ll\sum_{Z\leq d\leq \sqrt{N+1}} \sum_{\substack{ p^2+q^2\leq N\\ p^2+q^2 \equiv -1 \pmod d\\ (pq,d)=1\\ p\geq Z,q\geq Z}}1+ O(N(\log N)^{-A/2+5}).
\end{align}
In order to give an upper bound for $M_2$, we use upper bound sieve weights to detect the primality of $p$ and $q$.
First we recall the fundamental lemma of sieve theory.
\begin{lemma}[Fundamental lemma of sieve theory]\label{upperSieve}
	Let $y>1$ and $s\geq 1$. There exists a set of numbers $(\lambda_d)$ such that 
	\begin{enumerate}
		\item  $\lambda_1=1$
		\item $|\lambda_d|\leq 1$ if $1< d< y$.
		\item $\lambda_d=0$ if $d\geq y$.	
	\end{enumerate}
	and for any integer $n>1$, $0\leq \sum_{d\mid n} \lambda_d$. Moreover, for any multiplicative function $g(d)$ with $0\leq g(d)<1$ and satisfying the dimension condition 
	\begin{align}
	\prod_{w\leq p \leq z} \left(1-g(p)\right)^{-1}\leq \left( \frac{\log z}{\log w}\right)^{\kappa}\left(1+ \frac{K}{\log w}\right)
	\end{align} 
	for all $2\leq w < z\leq y$, we have
	\begin{align*}
	\sum_{d\mid P(z)}{\lambda_dg(d)}= \prod_{p<z}\left(1- {g(p)} \right)\left(1+ O\left(e^{-s}\frac{K}{\log z}\right)
	\right),
	\end{align*} where $P(z)=\prod_{p<z}p$ and $s=\log y/\log z$, the implied constant only depends on $\kappa$.
\end{lemma}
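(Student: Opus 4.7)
The plan is to construct the weights $(\lambda_d)$ as the Brun--Rosser--Iwaniec combinatorial upper bound sieve of dimension $\kappa$, following the presentation in Iwaniec--Kowalski (Chapter 6) or Friedlander--Iwaniec's \emph{Opera de Cribro}.

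\textbf{Construction.} I would take $\lambda_d$ supported on squarefree divisors of $P(z)$. For such $d$, write $d=p_1p_2\cdots p_r$ with $z>p_1>p_2>\cdots>p_r$, fix the sieve level $D=y$ and a truncation parameter $\beta=\beta(\kappa)>0$, and set
$$\lambda_d=\mu(d)\,\mathds{1}\bigl[\,p_j(p_1p_2\cdots p_{j-1})^{\beta}<D\text{ for every odd }j\leq r\,\bigr],$$
with $\lambda_d=0$ otherwise. Then $\lambda_1=1$, $|\lambda_d|\leq 1$, and $\lambda_d=0$ for $d\geq y$ are automatic.

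\textbf{Upper-bound property.} To verify $0\leq\sum_{d\mid n}\lambda_d$ for every $n>1$, I would iterate the Buchstab identity
$$\sum_{d\mid(n,P(z))}\mu(d)=\mathds{1}[(n,P(z))=1]-\sum_{p\mid(n,P(z))}\ \sum_{d\mid(n/p,P(p))}\mu(d),$$
cutting each branch as soon as the truncation fails. Since the cut is arranged to fall at an odd depth, each discarded Möbius contribution enters with a non-negative sign, yielding $\sum_{d\mid n}\lambda_d\geq \mathds{1}[(n,P(z))=1]\geq 0$.

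\textbf{Main asymptotic and main obstacle.} Set $V(z)=\prod_{p<z}(1-g(p))$, so that $\sum_{d\mid P(z)}\mu(d)g(d)=V(z)$ exactly. Then
$$\sum_{d\mid P(z)}\lambda_d g(d)-V(z)=\sum_{d\text{ violating the truncation}}\mu(d)g(d),$$
and each violating $d$ factors uniquely as a minimal ``bad'' prefix $p_1\cdots p_j$ (with $j$ odd and $p_j(p_1\cdots p_{j-1})^\beta\geq D$) times an arbitrary squarefree divisor of $P(p_j)$. Summing out the suffix produces a factor $V(p_j)$, reducing the task to bounding
$$\sum_{j\text{ odd}}\ \sum_{\substack{z>p_1>\cdots>p_j\\ p_j(p_1\cdots p_{j-1})^\beta\geq D}} g(p_1)\cdots g(p_j)\,V(p_j).$$
I would then apply the Rankin trick, multiplying each term by $\bigl(p_j(p_1\cdots p_{j-1})^\beta/D\bigr)^\varepsilon\geq 1$ with $\varepsilon\asymp 1/\log z$, extending the sum to an Euler product $\prod_{p<z}(1+g(p)\,p^{(1+\beta)\varepsilon})$, and invoking the dimension hypothesis to bound this product by $V(z)^{-1}\exp(O(K))$. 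Pulling out $D^{-\varepsilon}=\exp(-s(1+O(1/\log z)))$ then converts the defect to $V(z)\cdot O(e^{-s}K/\log z)$. The delicate point, which I expect to be the main obstacle, is the combinatorial non-negativity step: one must verify that with the Rosser truncation the first violation always lies at an odd depth in the Buchstab expansion so that the discarded pieces appear with the correct sign, and this is precisely where the choice of $\beta$ (depending on $\kappa$) matters. The analytic half --- converting the truncation defect to $e^{-s}$ via Rankin --- is routine once the dimension condition is in hand, with the $K/\log z$ factor arising from the slack built into the hypothesis between $w$ and $z$.
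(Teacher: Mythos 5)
The paper offers no proof of this lemma beyond a citation to Lemma 6 in Chapter 6 of Iwaniec--Kowalski, and your sketch is exactly the standard combinatorial ($\beta$-sieve) argument given there: Buchstab iteration with an odd-depth truncation for the non-negativity, and Rankin's trick against the dimension condition for the $e^{-s}$ defect. (Your truncation places the exponent $\beta$ on the prefix $p_1\cdots p_{j-1}$ rather than on $p_j$ as in the usual Rosser condition $p_j^{\beta+1}p_1\cdots p_{j-1}<D$, but in the fundamental-lemma regime this makes no essential difference.)
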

\begin{proof}
	See Lemma 6 in Chapter 6 of \cite{IwaniecBook}.
\end{proof}
Let 
 $\theta(m)=\sum_{\substack{e\mid n\\ e\leq E}} \lambda_e$, $E=N^\delta$, for some $0<\delta<1/2$. Let \begin{align}\label{defS}
 S=\sum_{Z\leq d\leq \sqrt{N+1}} \sum_{\substack{ m^2+n^2\leq N\\m^2+n^2 \equiv-1 \pmod d\\ (mn,d)=1 }}\theta (m)\theta(n)f(m)f(n),
 \end{align} where $f$ is a smooth function which is $1$ on $[\frac{Z}{2}, 2 \sqrt{N}]$. Since $\theta(p) \geq 1$ when $p >E$, thus $M_2\ll S$. From \eqref{m2bounds}, it is enough to obtain an upper bound for $S$. Suppose further that $f$ is bounded by $1$ elsewhere satisfying 
 \begin{align}
 f^{(n)}(x)\ll Z^{-n}\label{Fhat}
 \end{align} for all $n\geq 1$ and $x$. 
\begin{lemma}[Poisson Summation formula]\label{PossionSummation}Let $f: \RR\rightarrow \CC$ be a Schwartz function, i.e. $f$ is smooth and $|f(x)| \ll (1+|x|)^{-n}$ as $x\rightarrow\infty$ for all $n$. Then
	\begin{align*}
	\sum_{n=-\infty}^\infty f(t+nm)=\sum_{k=-\infty}^\infty \frac{1}{m} \hat{f}\left(\frac{k}{m}\right)e^{2\pi i \frac{kt}{m}},
	\end{align*}
	where $\hat{f}(k)=\int_{-\infty}^{\infty}f(x)e^{-2\pi ikx}dx$.
\end{lemma}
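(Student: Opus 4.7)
The plan is to prove the Poisson summation formula by the standard periodization argument: build a function of $t$ which is periodic with period $m$ by summing the translates $f(t+nm)$, then expand it into its Fourier series and identify the Fourier coefficients with $\frac{1}{m}\hat{f}(k/m)$.

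First, I would define
\[
F(t) := \sum_{n=-\infty}^{\infty} f(t+nm).
\]
Because $f$ is Schwartz, $|f(x)|\ll (1+|x|)^{-2}$ uniformly, so the series defining $F$ converges absolutely and uniformly on compact sets; the same bound applied to $f^{(k)}$ shows that $F\in C^\infty(\RR)$ and that differentiation commutes with summation. An index shift $n\mapsto n+1$ shows that $F(t+m)=F(t)$, so $F$ is smooth and $m$-periodic. Consequently $F$ admits a Fourier expansion
\[
F(t)=\sum_{k=-\infty}^{\infty} c_k\, e^{2\pi i kt/m},\qquad c_k=\frac{1}{m}\int_0^m F(t)\, e^{-2\pi i kt/m}\,dt,
\]
and the rapid decay of the Fourier coefficients of a smooth periodic function guarantees pointwise (indeed uniform) convergence.

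Next I would compute the coefficients $c_k$ explicitly. Substituting the definition of $F$, interchanging the sum and integral (justified by the absolute convergence noted above), and then making the change of variables $u=t+nm$ in each term, I would get
\[
c_k=\frac{1}{m}\sum_{n=-\infty}^{\infty}\int_{nm}^{(n+1)m} f(u)\, e^{-2\pi i k(u-nm)/m}\,du
=\frac{1}{m}\int_{-\infty}^{\infty} f(u)\, e^{-2\pi i ku/m}\,du,
\]
since $e^{2\pi i kn}=1$ for all integers $k,n$ and the intervals $[nm,(n+1)m)$ tile $\RR$. The right-hand side is exactly $\frac{1}{m}\hat{f}(k/m)$, and inserting this back into the Fourier expansion of $F(t)$ yields the stated identity.

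The only non-routine points are analytic: verifying that the interchange of sum and integral is valid and that the Fourier series converges to $F(t)$ pointwise. Both are immediate consequences of the Schwartz hypothesis on $f$, since for any $N$ one has $|f(x)|\ll_N (1+|x|)^{-N}$, which gives absolute convergence of $\sum_n f(t+nm)$ uniformly in $t$ and also (via integration by parts applied to $\hat{f}$) rapid decay of $\hat{f}(k/m)$ in $k$. Thus there is no genuine obstacle; the proof is essentially the verification that $F$ is a smooth periodic function whose Fourier coefficients are scaled samples of $\hat{f}$.
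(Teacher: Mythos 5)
Your proof is correct: the periodization argument (forming $F(t)=\sum_n f(t+nm)$, checking $m$-periodicity and smoothness, and computing the Fourier coefficients by unfolding the integral over $[0,m]$ to all of $\RR$) is the standard proof of Poisson summation, and all the analytic justifications you flag (uniform absolute convergence, interchange of sum and integral, rapid decay of $\hat f(k/m)$) are indeed immediate from the Schwartz hypothesis. The paper itself does not prove this lemma but simply cites equation (4.24) of Iwaniec--Kowalski, where essentially this same periodization argument is given, so your write-up supplies exactly the proof the paper delegates to the reference.
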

\begin{proof}
	See equation (4.24) in Chapter 4 of \cite{IwaniecBook}.
\end{proof}
We have
\begin{align}\label{fhat1}
\hat{f}(\lambda)=\int_\RR f(x)e(-\lambda x)dx\ll \sqrt{N}.
\end{align}
Also, from \eqref{Fhat}, 
\begin{align}\label{fhat2}
\hat{f}\left( \frac{h_1}{e_1d}\right) \ll \left(\frac{e_1d}{h_1}\right)^{j}Z^{-j}\sqrt{N}, \text{ for all }j\geq 1.
\end{align}
Applying Lemma \ref{PossionSummation}, we have 
\begin{align*}
S&=\sum_{e_1,e_2\leq E} \lambda_{e_1}\lambda_{e_2} \sum_{\substack{Z\leq d\leq \sqrt{N+1}\\ (e_1e_2,d)=1}} \sum_{\substack{e_1^2m^2+e_2^2n^2\equiv-1\pmod d\\ (mn,d)=1}} f(e_1m)f(e_2n)\\
&=\sum_{e_1,e_2\leq E} \lambda_{e_1}\lambda_{e_2} \sum_{\substack{Z\leq d\leq \sqrt{N+1}\\ (e_1e_2,d)=1}} \sum_{\substack{e_1^2u^2+e_2^2v^2\equiv-1\pmod d\\ (uv,d)=1\\ u,v\leq d}} \sum_{m\equiv u\pmod d}f(e_1m)\sum_{n\equiv v\pmod d}f(e_2n)\\
&= \sum_{e_1,e_2\leq E} \lambda_{e_1}\lambda_{e_2} \sum_{\substack{Z\leq d\leq \sqrt{N+1}\\ (e_1e_2,d)=1}}\frac{1}{d^2} \sum_{\substack{e_1^2u^2+e_2^2v^2\equiv-1\pmod d\\ (uv,d)=1}} \frac{1}{ e_1e_2} \sum_{h_1}\sum_{h_2} e\left( \frac{uh_1+vh_2}{d}\right)\hat{f}\left(\frac{h_1}{e_1 d}\right) \hat{f}\left( \frac{h_2}{e_2 d}\right).
\end{align*}
The terms with $h_1=h_2=0$ give a contribution of 
\begin{align}
&\sum_{e_1\leq E} \sum_{e_2\leq E}\lambda_{e_1}\lambda_{e_2} \sum_{\substack{Z\leq d\leq \sqrt{N+1}\\ (e_1e_2,d)=1}}\frac{1}{d^2e_1e_2} \sum_{ \substack{e_1^2u^2+e_2^2v^2\equiv -1\pmod d\\ (uv,d)=1}}\hat{f}(0)\hat{f}(0)\nonumber\\
& =\sum_{e_1, e_2 \leq E} \frac{\lambda_{e_1}\lambda_{e_2}}{e_1e_2} \sum_{\substack{Z\leq d\leq \sqrt{N+1}\\ (e_1e_2,d)=1}} \frac{r(d)}{d^2}(\hat{f}(0))^2\nonumber\\
&= (\hat{f}(0))^2\sum_{Z\leq d\leq \sqrt{N+1}} \frac{r(d)}{d^2} \sum_{\substack{e_1,e_2\leq E\\(e_1e_2,d)=1}} \frac{\lambda_{e_1}\lambda_{e_2}}{e_1e_2}\nonumber \\
&= (\hat{f}(0))^2\sum_{Z\leq d\leq \sqrt{N+1}} \frac{r(d)}{d^2} \left(\sum_{\substack{e\leq E\\(e,d)=1}} \frac{\lambda_{e}}{e}\right)^2.\label{h1h2=0}
\end{align}
Applying Lemma \ref{upperSieve} with $z=y=E$, we have 
\begin{align*}
\sum_{\substack{e_1\leq E\\(e_1,d)=1}} \frac{\lambda_{e_1}}{e_1}&\ll \prod_{\substack{p\leq E\\(p,d)=1}}\left(1-\frac{1}{p}\right)\\
& \ll \prod_{p\leq E} \left( 1- \frac{1}{p}\right) \prod_{ \substack{p\mid d\\ p\leq E}} \left(1- \frac{1}{p}\right)^{-1}\\
& \ll \prod_{p\leq E} \left( 1- \frac{1}{p}\right) \prod_{ \substack{p\mid d}} \left(1- \frac{1}{p}\right)^{-1}\\
&\ll \prod_{p\leq E} \left( 1- \frac{1}{p}\right) \frac{d}{\phi(d)}.
\end{align*}
From Lemma \ref{AssympRd}, we see that 
\begin{align*}
\sum_{Z\leq d\leq \sqrt{N+1}} \frac{s(d)}{\phi(d)^2} =\frac{1}{2}\prod_{\substack{p>2}} \left( 1+ \frac{1+3p \left( \frac{-1}{p}\right)}{(p-1)^2p}\right)\log \frac{\sqrt{N+1}}{Z}(1+o(1)).
\end{align*}
Since $E=N^\delta$, $Z=\frac{\sqrt{N}}{(\log N)^A}$, we see that \eqref{h1h2=0} is bounded from above by
\begin{align*}
\hat{f}(0)\hat{f}(0)\sum_{Z\leq d\leq \sqrt{N+1}} \frac{s(d)}{d^2} \prod_{p\leq E} \left(1- \frac{1}{p}\right)^2 \frac{d^2}{\phi(d)^2}
\ll & \frac{(\hat{f}(0))^2}{(\log E)^2}
 \log \frac{\sqrt{N+1}}{Z}\\
\ll &  N \frac{\log\log N}{(\log N)^2}.
\end{align*}

By breaking $e_1$, $e_2$ and $d$ into dyadic ranges , we need to consider 
\begin{align}\label{errorSumPartition}
\sum_{e_1 \sim E_1,e_2\sim E_2} \lambda_{e_1}\lambda_{e_2} \sum_{\substack{ d\sim D\\ (e_1e_2,d)=1}}\frac{1}{d^2} \sum_{\substack{e_1^2u^2+e_2^2v^2\equiv-1\pmod d\\ (uv,d)=1}} \frac{1}{ e_1e_2} \sum_{h_1}\sum_{h_2} e\left( \frac{uh_1+vh_2}{d}\right)\hat{f}\left(\frac{h_1}{e_1 d}\right) \hat{f}\left( \frac{h_2}{e_2 d}\right),
\end{align}
where $E_1,E_2\leq E$, $(h_1,h_2)\not=(0,0)$, and $Z\leq D\leq \sqrt{N+1}$. Since $E,D\ll N$, the number of $E_1,E_2$ and $D$ is bounded by $ N^{o(1)}$.
Applying \eqref{fhat2} with $j=n$ for $ \hat{f}\left( \frac{h_1}{e_1d}\right)$ and $j=2$ for $\hat{f}\left( \frac{h_2}{e_2d}\right)$, we see that the contribution from $|h_1|\geq \frac{D E_1 N^{\epsilon}}{\sqrt{N}}$ is bounded by
\begin{align*}
&\sum_{e_1\sim E_1,e_2\sim E_2} \lambda_{e_1}\lambda_{e_2} \sum_{\substack{d\sim D\\(e_1e_2,d)=1}} \frac{1}{d^2} \sum_{ \substack{e_1^2 u^2+e_2^2v^2 \equiv -1 \pmod d\\ (uv,d)=1}} \frac{1}{e_1e_2} \sum_{|h_1| \geq \frac{DE_1 N^{\epsilon}}{\sqrt{N}}} \sum_{h_2}\hat{f}\left( \frac{h_1}{e_1d}\right) \hat{f}\left( \frac{h_2}{e_2d}\right)\\
& \ll \sum_{e_1\sim E_1,e_2\sim E_2}\frac{1}{e_1e_2} \sum_{ d\sim D } \frac{r(d)}{d^2}  \sum_{|h_1|\geq \frac{ DE_1 N^{\epsilon}}{\sqrt{N}}}\left(\frac{e_1d}{h_1}\right)^{n} \left(\sum_{h_2\not=0} \left( \frac{e_2d}{h_2}\right)^{2}+\hat{f}(0)\right)\\
& \ll N^\epsilon (E_1D)^{n}\left(\frac{\sqrt{N}}{E_1DN^{\epsilon}}\right)^{n-1}Z^{-n}\sqrt{N}\left((E_2D)^2Z^{-2}\sqrt{N}+\sqrt{N}\right)\\
& \ll N^\epsilon E_1E_2^2D^3 N^{-\epsilon n/2-1/2}+N^\epsilon E_1 DN^{-\epsilon n/2+1/2}\\
& \ll N^{-\delta},
\end{align*}
by taking $n$ sufficiently large. The terms with $|h_2| \geq \frac{DE_2N^\epsilon}{\sqrt{N}}$ can be bounded $N^{-\delta}$ in the same way. Thus it remains to consider the case $0\leq h_1\leq \frac{DE_1 N^\epsilon}{\sqrt{N}}$, $0\leq h_2 \leq \frac{DE_2 N^\epsilon}{\sqrt{N}}$ and $(h_1,h_2)\not=(0,0)$. Denote
\begin{align}\label{Edef}
E(e_1,e_2,h_1,h_2,d)=\sum_{ \substack{e_1^2 u^2 +e_2^2v^2 \equiv -1 \pmod d\\ (uv,d)=1}} e\left( \frac{uh_1+vh_2}{d}\right).
\end{align}
 We use the following lemma to complete the estimates for $M_2$, and the proof of Lemma \ref{ExpSumEst} is given in Section \ref{ExpSumEstProof}. 
 \begin{lemma}\label{ExpSumEst}If $(h_1,h_2)\not=(0,0)$, then
 	\begin{align*}
 	E(e_1,e_2,h_1,h_2,d) \ll C^{\omega(d)}\sqrt{(h_1,h_2,d)d},
 	\end{align*} where $C>0$ is an absolute constant. 
 \end{lemma}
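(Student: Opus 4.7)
The strategy is to reduce the bound on $E$ to prime-power moduli via Chinese Remainder Theorem multiplicativity, and then to estimate each prime-power factor by opening the congruence condition with additive characters and invoking Weil-type bounds on the resulting Kloosterman sums. First, since the inner summation forces $(e_1 e_2, d) = 1$, the substitution $u \mapsto \overline{e_1} u$, $v \mapsto \overline{e_2} v$ modulo $d$ reduces to the case $e_1 = e_2 = 1$, replacing $(h_1, h_2)$ by $(h_1 \overline{e_1}, h_2 \overline{e_2})$, which has the same gcd with $d$. Writing $T(h_1, h_2, d)$ for the reduced sum, CRT yields
\[
T(h_1, h_2, d_1 d_2) = T(h_1 \overline{d_2}, h_2 \overline{d_2}, d_1)\, T(h_1 \overline{d_1}, h_2 \overline{d_1}, d_2) \quad \text{for } (d_1, d_2) = 1,
\]
and the target bound $C^{\omega(d)} \sqrt{(h_1, h_2, d)\, d}$ is likewise multiplicative in coprime moduli, so it suffices to prove $|T(h_1, h_2, p^k)| \ll \sqrt{(h_1, h_2, p^k)\, p^k}$ with an absolute implied constant for each odd prime power (the $p = 2$ contribution has already been absorbed into error terms earlier in the argument).

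Next, the gcd factor $(h_1, h_2, p^k)$ will be extracted by a $p$-adic descent. If $p \mid (h_1, h_2)$, write $h_i = p h_i'$ and split $u = u_0 + p^{k-1}\alpha$, $v = v_0 + p^{k-1}\beta$ with $u_0, v_0 \pmod{p^{k-1}}$ and $\alpha, \beta \pmod{p}$. The congruence $u^2 + v^2 \equiv -1 \pmod{p^k}$ then decouples into $u_0^2 + v_0^2 \equiv -1 \pmod{p^{k-1}}$ together with a linear constraint in $(\alpha, \beta)$ mod $p$ with exactly $p$ solutions since $(u_0 v_0, p) = 1$, while the phase $e((h_1 u + h_2 v)/p^k) = e((h_1' u_0 + h_2' v_0)/p^{k-1})$ depends only on $(u_0, v_0) \pmod{p^{k-1}}$. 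This yields the recursion $T(p h_1', p h_2', p^k) = p \cdot T(h_1', h_2', p^{k-1})$, and iterating reduces us to the case $p \nmid (h_1, h_2)$, where the goal becomes $|T(h_1, h_2, p^k)| \ll \sqrt{p^k}$.

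In this main case, I would open the congruence with additive characters:
\[
T(h_1, h_2, p^k) = \frac{1}{p^k} \sum_{a \pmod{p^k}} e\!\left(\frac{a}{p^k}\right) G_1(a)\, G_2(a), \qquad G_i(a) = \sum_{\substack{u \pmod{p^k}\\ (u, p) = 1}} e\!\left(\frac{a u^2 + h_i u}{p^k}\right).
\]
For $(a, p) = 1$, completing the square in $G_i(a)$ yields a shifted quadratic Gauss sum of magnitude $p^{k/2}$ twisted by $e(-\overline{4a}\, h_i^2 / p^k)$, with a bounded correction arising from the $(u, p) = 1$ restriction. Assembling the two $G_i$'s, the contribution from $(a, p) = 1$ reduces (up to Jacobi-symbol factors and smaller error terms) to a Kloosterman sum of the form $K(1,\, -\overline{4}(h_1^2 + h_2^2);\, p^k)$, which is bounded by $O(\sqrt{p^k})$ via Weil's theorem (or Estermann's evaluation for prime powers), since one of its arguments is coprime to $p^k$. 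The contribution from $p \mid a$ is treated by factoring out the appropriate power of $p$ from $a$ and estimating the resulting smaller Gauss-Kloosterman hybrids. The main technical hurdle will be the bookkeeping of the $(u, p) = 1$ restriction after completing the square, together with the sub-cases $v_p(a) > 0$, the parity of $k$, and the possibility $h_1^2 + h_2^2 \equiv 0 \pmod p$ (which collapses the Kloosterman sum to a Ramanujan sum); each must produce a bound with an absolute constant, so that the factor $C^{\omega(d)}$ accumulates cleanly under multiplicativity.
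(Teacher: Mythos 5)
Your proposal is correct in outline and rests on the same analytic engine as the paper --- reduction to prime powers by CRT, opening the congruence with additive characters mod $\ell^\alpha$, evaluating the restricted quadratic Gauss sums by completing the square, and finishing with the Weil/Estermann bound for Kloosterman sums --- but it organizes the argument differently in a way worth noting. The paper keeps $h_1=\ell^t h_1'$, $h_2=\ell^s h_2'$ general, decomposes the $a$-sum according to $v_\ell(a)$, and then runs through Cases 0--6 depending on how $t$ and $s$ compare with $\alpha$; the gcd factor $\sqrt{(h_1,h_2,\ell^\alpha)}$ emerges from the surviving range of $k$ in that decomposition. You instead extract $(h_1,h_2,p^k)$ \emph{first} via the $p$-adic descent $T(ph_1',ph_2',p^k)=p\,T(h_1',h_2',p^{k-1})$ (which is valid for odd $p$ and $k\ge 2$, with the terminal case $k=1$, $p\mid(h_1,h_2)$ handled trivially by $|T|\le s(p)\ll p=\sqrt{p\cdot p}$), reducing to the case where some $h_i$ is a unit mod $p$. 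This buys a real simplification: once $(h_1,p)=1$, the factor $G_1(a)$ vanishes for every $a$ with $1\le v_p(a)<k$, so only $(a,p)=1$ and $a=0$ survive and the paper's $k$-by-$k$ bookkeeping collapses. One caveat on your main case: the correction to $G_2(a)$ coming from the $(u,p)=1$ restriction when $p\mid h_2$ is $p\,S(a,h_2/p,p^{k-2})$, of magnitude $p^{k/2}$ --- the \emph{same} order as the main Gauss sum, not a "bounded correction" or "smaller error term." These cross terms produce exactly the Sali\'e-type sums (Gauss sum times character-twisted Kloosterman sum) that occupy the paper's Cases 3, 4 and 6; they still obey $O(\sqrt{p^k})$ with an absolute constant, so your conclusion stands, but this is the genuine content you have deferred to "bookkeeping" and it must be carried out to complete the proof.
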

 Applying Lemma \ref{ExpSumEst} to \eqref{errorSumPartition}, we have 
 \begin{align*}
 &\sum_{e_1 \sim E_1,e_2\sim E_2} \lambda_{e_1}\lambda_{e_2} \sum_{\substack{ d\sim D\\ (e_1e_2,d)=1}}\frac{1}{d^2} \frac{1}{ e_1e_2} \sum_{|h_1|\leq \frac{DE_1 N^\epsilon}{\sqrt{N}}}\sum_{\substack{|h_2|\leq \frac{DE_2 N^\epsilon}{\sqrt{N}}\\ (h_1,h_2)\not=(0,0)}} \hat{f}\left(\frac{h_1}{e_1 d}\right) \hat{f}\left( \frac{h_2}{e_2 d}\right)E(e_1,e_2,h_1,h_2,d)\\
 & \ll N \sum_{e_1 \sim E_1} \sum_{e_2 \sim E_2}  \frac{1}{e_1e_2}\sum_{\substack{d\sim D\\ (e_1e_2,d)=1}} \frac{1}{d^2} \sum_{ |h_1|\frac{\leq DE_1 N^\epsilon}{\sqrt{N}}} \sum_{ |h_2| \leq \frac{DE_2 N^\epsilon}{\sqrt{N}}} C^{\omega(d)}\sqrt{(h_1,h_2,d) d}\\
 & \ll N^{1+\epsilon} \sum_{ g\leq D} \frac{C^{\omega(g)}}{g^2}\sum_{d\sim D/g} \frac{1}{d^2} \sum_{ |h_1| \leq \frac{DE_1N^\epsilon}{\sqrt{N}g}} \sum_{|h_2|\leq \frac{DE_2N^\epsilon}{\sqrt{N}g}} C^{\omega(d)} \sqrt{g gd}\\
 & \ll N^{1+\epsilon} \sum_{g\leq D} \frac{C^{\omega(g)}}{g} \frac{g}{D} \frac{DE_1N^\epsilon}{\sqrt{N}g} \frac{D E_2 N^\epsilon}{\sqrt{N}g}\max_{d\sim D}C^{\omega(d)}\sqrt{d}\\
 & \ll \sum_{ g\leq D} \frac{\tau(g)^{\log C/\log 2}}{g}D E_1 E_2N^\epsilon \max_{d\sim D }\tau(d)^{\log C/\log 2}\sqrt{d}\\
 & \ll D^{3/2+\epsilon}E_1E_2 N^\epsilon.
 \end{align*}
 Choosing $E\ll N^{1/8-\delta_0}$, we find that $S\ll N^{1-\delta'}$ for some $\delta'>0$.
 \section{Proof of Lemma \ref{ExpSumEst}}\label{ExpSumEstProof}
\subsection{Quadratic Gauss Sums and Twisted Kloosterman Sums}
\subsubsection{Quadratic Gauss Sum}
Let $a,b,d$ be natural numbers. The quadratic Gauss sum is defined by 
\begin{align}\label{defGaussSum}
S(a,b,d):= \sum_{ n\pmod d} e\left( \frac{an^2+bn}{d}\right).
\end{align}
\begin{lemma}
	 We have the following properties of $S(a,b,d)$.
	 \begin{enumerate}
		\item If $(c,d)=1$, then $S(a,b,cd)=S(ac,b,d)S(ad,b,c)$.
		\item 
		If $(a,d)>1$, then $S(a,b,d)=0$ except when $(a,d)\mid b$, then 
		\begin{align}\label{factor}
		S(a,b,d)= (a,d) S\left(\frac{a}{(a,d)}, \frac{b}{(a,d)}, \frac{d}{(a,d)}\right).
		\end{align}
		\item For $(a,p)=1$ and $p>2$, 
		\begin{align}\label{completeSq}
		S(a,b,p^\alpha)=\sum_{n\pmod {p^\alpha}} e\left( \frac{an^2+bn}{p^\alpha}\right)= \left( \frac{a}{p^\alpha}\right) S(1,0,p^\alpha) e\left( -\frac{\overline{4a}b^2}{p^\alpha}\right)
		\end{align}
		\item \begin{align}
		S(1,0,p^\alpha)
		&=p S(1,0,p^{\alpha-2}), \alpha>2\label{GaussSumRelation}\\
		S(1,0,p^2)&=p.\label{SpSquare}
		\end{align}
		\item \begin{align}\label{GaussSumEval}
		S(1,0,d)=\sqrt{d^*}
		\end{align}
	\end{enumerate}
\end{lemma}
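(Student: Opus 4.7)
The proof splits into five parts, and I would prove them in the order (1), (2), (4), (3), (5), since the evaluation in (5) rests on all the preceding identities.

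For (1), I apply CRT: when $(c,d)=1$, every $n\pmod{cd}$ writes uniquely as $n=n_1 d+n_2 c$ with $n_1\pmod c$ and $n_2\pmod d$. Expanding $an^2+bn$ and dividing by $cd$, the cross term $2an_1n_2$ is integer-valued and disappears from the exponential, while the remaining terms separate as $(adn_1^2+bn_1)/c + (acn_2^2+bn_2)/d$, yielding the factorization $S(ad,b,c)S(ac,b,d)$. For (2), set $g=(a,d)$ and split $n=n_0+(d/g)m$ with $n_0\pmod{d/g}$, $m\pmod g$; since $a\cdot d/g$ is divisible by $d$, the quadratic term reduces to $an_0^2\pmod d$, and the sum factors as $\sum_{n_0}e((an_0^2+bn_0)/d)\cdot \sum_{m=0}^{g-1}e(bm/g)$, with the $m$-sum equal to $g$ when $g\mid b$ and $0$ otherwise; under $g\mid b$ the outer sum is exactly $S(a/g,b/g,d/g)$.

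For (4), the analogous split $n=n_0+p^{\alpha-1}m$ gives $n^2\equiv n_0^2+2n_0p^{\alpha-1}m\pmod{p^\alpha}$ (the $m^2$ term dies because $2\alpha-2\geq \alpha$ for $\alpha\geq 2$); the $m$-sum forces $p\mid n_0$ for odd $p$, and the substitution $n_0=pn_1$ rescales $n_0^2/p^\alpha$ to $n_1^2/p^{\alpha-2}$, producing (\ref{GaussSumRelation}); for $\alpha=2$ only $n_0=0$ survives, so $S(1,0,p^2)=p$. For (3) I complete the square: $2a$ is invertible mod $p^\alpha$ (as $p$ is odd and $(a,p)=1$), and $n\mapsto n-\overline{2a}b$ converts the exponent to $an^2-\overline{4a}b^2$, yielding $S(a,b,p^\alpha)=e(-\overline{4a}b^2/p^\alpha)S(a,0,p^\alpha)$. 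The identity $S(a,0,p^\alpha)=(a/p^\alpha)S(1,0,p^\alpha)$ then follows by the dilation invariance $n\mapsto cn$: if $a\equiv c^2\pmod{p^\alpha}$ (automatic from Hensel whenever $a$ is a QR mod $p$) the two Gauss sums agree, while for a non-residue $a$ one reduces to a fixed non-residue and tracks the resulting sign via the classical character sum $\sum_{n\pmod p}(n/p)e(n/p)=\sqrt{p^*}$.

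For (5), iterating (4) yields $S(1,0,p^\alpha)=p^{\alpha/2}=\sqrt{(p^\alpha)^*}$ for even $\alpha$, and $S(1,0,p^\alpha)=p^{(\alpha-1)/2}\sqrt{p^*}=\sqrt{(p^\alpha)^*}$ for odd $\alpha$, assuming as input the classical Gauss evaluation $S(1,0,p)=\sqrt{p^*}$. For general odd $d$, I would induct on $\omega(d)$: writing $d=d_1 d_2$ with $(d_1,d_2)=1$ and applying (1) gives $S(1,0,d_1 d_2)=S(d_2,0,d_1)S(d_1,0,d_2)=(d_2/d_1)(d_1/d_2)\sqrt{d_1^*}\sqrt{d_2^*}$, after extending (3) to Jacobi symbols (which itself follows from (1) and the prime-power case). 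The main obstacle is matching the quadratic reciprocity sign $(d_1/d_2)(d_2/d_1)=(-1)^{(d_1-1)(d_2-1)/4}$ against the sign discrepancy between $\sqrt{d_1^*}\sqrt{d_2^*}$ and $\sqrt{(d_1 d_2)^*}$; a short case check on $d_1,d_2\pmod 4$ combined with $(-1/d)=(-1)^{(d-1)/2}$ shows that the two signs cancel exactly, closing the induction.
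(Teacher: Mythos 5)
Your proposal is correct, but it is worth noting that the paper does not actually prove this lemma --- it simply cites Chapter 3 of Iwaniec--Kowalski --- so you have supplied a self-contained derivation where the paper has only a reference. Parts (1), (2) and (4) of your argument are complete and clean: the CRT substitution $n=n_1d+n_2c$ with the vanishing cross term, the splitting $n=n_0+(d/g)m$ with the $m$-sum detecting $g\mid b$, and the splitting $n=n_0+p^{\alpha-1}m$ forcing $p\mid n_0$ are exactly the standard computations and they check out. Parts (3) and (5) are also sound, but they import two classical external facts: Gauss's sign determination $S(1,0,p)=\sqrt{p^*}$ and quadratic reciprocity. There is no circularity in doing so provided both are taken as known, but be aware that the cited source proceeds in the opposite logical order, evaluating the composite Gauss sum directly (by a Fourier-analytic/approximate functional equation argument) and then \emph{deducing} reciprocity from it; your route instead uses reciprocity to absorb the sign $\bigl(\tfrac{d_1}{d_2}\bigr)\bigl(\tfrac{d_2}{d_1}\bigr)$ into the discrepancy between $\sqrt{d_1^*}\sqrt{d_2^*}$ and $\sqrt{(d_1d_2)^*}$, and your case check mod $4$ for that cancellation is correct. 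One small streamlining for (3): the recursion in (4) holds verbatim for $S(a,0,p^\alpha)$ with $(a,p)=1$, so you can reduce directly to the base cases $\alpha=1,2$ and invoke $\sum_{n}(\tfrac{n}{p})e(an/p)=(\tfrac{a}{p})\sqrt{p^*}$ only at $\alpha=1$; this avoids the slightly hand-waved ``reduce to a fixed non-residue'' step in your dilation argument, though that step can be made rigorous as you describe.
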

\begin{proof}
	See Chapter 3 of \cite{IwaniecBook}.
\end{proof}
%
%
\subsubsection{Kloosterman Sums}
Let $a,b,m$ be natural numbers. The Kloosterman sum is defined by
\begin{align}\label{KloostermanSumDef}
K(a,b;m)=\sum_{ \substack{(x,m)=1\\ x\pmod m}} e\left( \frac{ax+b \overline{x}}{m}\right),
\end{align}
where $\overline{x}$ is the inverse of $x$ modulo $m$.
\begin{lemma}\label{KloostermanSumBounds}
Let $K(a,b;m)$ be defined as above. Then
	\begin{align*}
	|K(a,b;m)|\leq \tau(m)\sqrt{(a,b,m)}\sqrt{m}.
	\end{align*}
\end{lemma}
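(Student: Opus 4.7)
The plan is to prove this Weil-type bound by reducing to prime-power moduli and then handling the cases $\alpha=1$ and $\alpha\ge 2$ separately. First I would apply the twisted multiplicativity of Kloosterman sums: for coprime $m_1,m_2$, writing $\overline{m_i}$ for the inverse of $m_i$ modulo the complementary factor and splitting $x$ by CRT gives
$$K(a,b;m_1m_2)=K(a,b\overline{m_2}^{\,2};m_1)\,K(a,b\overline{m_1}^{\,2};m_2).$$
Since $\tau(\cdot)$ and $(a,b,\cdot)$ are multiplicative on coprime factors, this reduces the claim to the case $m=p^\alpha$.

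For $m=p$ prime this is the classical Weil bound $|K(a,b;p)|\le 2\sqrt{p}$ when $p\nmid(a,b)$, with the trivial estimate $|K(a,b;p)|\le p-1$ otherwise; both cases match $\tau(p)\sqrt{(a,b,p)\,p}$. I would cite this as the single non-elementary input from a standard reference such as Iwaniec--Kowalski.

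For $m=p^\alpha$ with $\alpha\ge 2$ I would use a Hensel-lifting argument to reduce to a Gauss sum. Setting $\beta=\lceil\alpha/2\rceil$ and writing $x=y+p^\beta z$ with $y\pmod{p^\beta}$ coprime to $p$ and $z\pmod{p^{\alpha-\beta}}$, the expansion $\overline{x}\equiv \overline{y}-p^\beta z\,\overline{y}^{\,2}\pmod{p^\alpha}$ gives
$$K(a,b;p^\alpha)=\sum_{\substack{y\pmod{p^\beta}\\ (y,p)=1}}e\!\left(\frac{ay+b\overline{y}}{p^\alpha}\right)\sum_{z\pmod{p^{\alpha-\beta}}}e\!\left(\frac{(a-b\overline{y}^{\,2})\,z}{p^{\alpha-\beta}}\right).$$
The inner $z$-sum vanishes unless $a\equiv b\overline{y}^{\,2}\pmod{p^{\alpha-\beta}}$, which pins $y$ down to at most two residue classes (or is vacuous when $p$ divides only one of $a,b$, in which case the inner sum forces $p\mid(a,b,p^{\alpha-\beta})$ and pulls out a factor $(a,b,p^\alpha)^{1/2}$). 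On the surviving residues the outer sum becomes an incomplete quadratic Gauss sum, which I would evaluate via \eqref{completeSq} and \eqref{GaussSumEval} to extract the factor $\sqrt{p^\alpha}$, the remaining constant being absorbed into $\tau(p^\alpha)$.

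The main obstacle is the Weil bound at $\alpha=1$, which is the sole deep ingredient; the higher prime-power case is mechanical but requires careful book-keeping of $v_p(a)$, $v_p(b)$, and $\alpha$ to produce exactly the factor $\sqrt{(a,b,p^\alpha)}$, particularly in the borderline regime where $p$ divides one of $a,b$ but not the other.
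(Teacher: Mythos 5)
Your proposal is correct and is, in substance, the paper's proof: the paper disposes of this lemma with a one-line citation to Corollary 11.12 in Chapter 11 of Iwaniec--Kowalski, and what you have written out (twisted multiplicativity to reduce to $m=p^{\alpha}$, the Weil bound as the sole deep input at $\alpha=1$, and the elementary $x=y+p^{\beta}z$ stationary-phase reduction to a quadratic Gauss sum for $\alpha\ge 2$) is exactly the standard argument behind that corollary. One small remark: in the case where $p$ divides exactly one of $a,b$ and $\alpha\ge 2$, the congruence $a\equiv b\overline{y}^{\,2}\pmod{p^{\alpha-\beta}}$ is insoluble, so $K(a,b;p^{\alpha})$ vanishes outright; your phrasing about the inner sum "forcing $p\mid(a,b,p^{\alpha-\beta})$ and pulling out a factor" is muddled, though the conclusion you need is the one you state first, namely that the sum is empty.
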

\begin{proof}
	See corollary 11.12 in chapter 11 of \cite{IwaniecBook}.
\end{proof}
\subsubsection{Sali\'e sums}
Let $m,n,d$ be natural numbers. The Sale\'e sum is defined by
\begin{align*}
	T(m,n;d):= \sum_{x\pmod {d} } \left( \frac{x}{d}\right) e\left( \frac{m \bar{x}+ n x}{d}\right),
\end{align*}
where $\left( \frac{\cdot}{d}\right)$ is the Jacobi-Legendre symbol.

\begin{lemma}\label{Salie}
	Suppose $(d,2mn)=1$, Then $T(m,n,d)$ vanishes unless there exists an $a$ with $a^2\equiv mn \pmod { p^\beta}$. Given $a$, all the solutions to $x^2 \equiv mn \pmod { d}$ can be written explicitly as $x= (r\bar{r}-s\bar{s}) a$, where $r,s$ run over the factorizations of $rs=d$ with $(r,s)=1$. 
	$$T(m,n;d)=\sqrt{d^*} \left( \frac{n}{d}\right) \sum_{\substack{rs=d\\(r,s)=1}} e\left( 2a\left( \frac{\bar{r}}{s}-\frac{\bar{s}}{r}\right)\right).$$
 
\end{lemma}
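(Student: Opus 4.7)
The plan is to combine four ingredients: a substitution reducing to the case $m=1$, a Chinese Remainder Theorem (CRT) decomposition into local Salié sums modulo prime powers, an evaluation of each local sum via quadratic Gauss sums, and a combinatorial identity repackaging the resulting product over $p\mid d$ into a sum over coprime factorizations $d=rs$.

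First, I reduce to $m=1$ and decompose multiplicatively. Since $(n,d)=1$, the substitution $x=\bar{n}y$ in the definition of $T(m,n;d)$ is a bijection on $(\ZZ/d\ZZ)^{\times}$, and using $(\bar{n}/d)=(n/d)$ yields
\begin{align*}
T(m,n;d)=(n/d)\,T(mn,1;d),\qquad T(K,1;d):=\sum_{x\,(d)}\left(\tfrac{x}{d}\right)e\!\left(\tfrac{K\bar{x}+x}{d}\right).
\end{align*}
Writing $d=\prod_{p}p^{\beta_p}$ and using the CRT decomposition $1/d\equiv\sum_{p}\overline{(d/p^{\beta_p})}/p^{\beta_p}\pmod{1}$, the Jacobi symbol's multiplicativity decouples $T(K,1;d)$ into a product of local Salié-type sums modulo each $p^{\beta_p}$. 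A substitution $x\mapsto cx$ for a nonresidue $c$ mod $p$ shows each local factor vanishes unless $K$ is a square modulo $p^{\beta_p}$; this yields the stated vanishing criterion, and we may fix local square roots $a_p$ and assemble them via CRT into a global $a$ with $a^2\equiv mn\pmod{d}$.

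Next---the technical heart---I evaluate each local factor. After substituting $x=a_py$, it reduces to $(a_p/p^{\beta_p})\sum_{y}(y/p^{\beta_p})\,e(a_p(y+\bar{y})/p^{\beta_p})$. I would prove this inner sum equals $\sqrt{(p^{\beta_p})^{*}}\bigl(e(2a_p/p^{\beta_p})+e(-2a_p/p^{\beta_p})\bigr)$ by opening the Jacobi symbol via its quadratic Gauss sum $(y/p^{\beta_p})=(\sqrt{(p^{\beta_p})^{*}})^{-1}\sum_{t}(t/p^{\beta_p})\,e(ty/p^{\beta_p})$, swapping the order of summation to expose an inner Kloosterman-type sum in $y$, completing the square inside the resulting exponential via \eqref{completeSq}, and applying \eqref{GaussSumRelation}, \eqref{SpSquare}, and \eqref{GaussSumEval}. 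The only surviving critical points are $y\equiv\pm 1\pmod{p^{\beta_p}}$, corresponding to $x\equiv\pm a_p$, which produce the pair $e(\pm 2a_p/p^{\beta_p})$.

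Finally, multiplying the local evaluations---using $(a/d)=\prod_{p}(a_p/p^{\beta_p})$ and $\sqrt{d^{*}}=\prod_{p}\sqrt{(p^{\beta_p})^{*}}$---gives $\sqrt{d^{*}}(n/d)\prod_{p}\bigl(e(2a/p^{\beta_p})+e(-2a/p^{\beta_p})\bigr)$. The combinatorial identity
\begin{align*}
\prod_{p}\bigl(e(2a/p^{\beta_p})+e(-2a/p^{\beta_p})\bigr)=\sum_{\substack{rs=d\\(r,s)=1}}e\!\left(2a\left(\tfrac{\bar{r}}{s}-\tfrac{\bar{s}}{r}\right)\right)
\end{align*}
then holds because coprime factorizations $d=rs$ correspond bijectively to subsets of $\{p:p\mid d\}$ (the primes dividing $r$), and by CRT the element $r\bar{r}_{(s)}-s\bar{s}_{(r)}\pmod{d}$ equals $-1$ at each prime power in $r$ and $+1$ at each in $s$, so $2a(\bar{r}/s-\bar{s}/r)\pmod{1}$ realizes the sign pattern $(\varepsilon_p)\in\{\pm 1\}^{\omega(d)}$ on the nose. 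The main obstacle is the local Salié evaluation at higher prime powers $\beta_p\ge 2$, where $(\cdot/p^{\beta_p})$ is no longer a primitive character and Gauss-sum opening loses its direct force; one must stratify the $y$-sum by residue class modulo $p$, apply \eqref{completeSq} inside each stratum, and use \eqref{GaussSumRelation} to reduce higher-power Gauss sums back to the base case before identifying the surviving contributions.
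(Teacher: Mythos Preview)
The paper does not actually prove this lemma: its entire proof is the sentence ``See equation (12.43) in Chapter 12 of \cite{IwaniecBook}.'' Your proposal, by contrast, sketches the standard argument that appears in that reference (reduction to $T(mn,1;d)$, CRT factorization, local Sali\'e evaluation via Gauss sums, and repackaging the product over $p\mid d$ as a sum over coprime factorizations $d=rs$). So in substance you are reproducing the cited proof rather than offering an alternative route.

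Your outline is correct, but two bookkeeping points deserve more care than you give them. First, the identity $\sqrt{d^*}=\prod_{p}\sqrt{(p^{\beta_p})^*}$ is false as written: for example $\sqrt{3^*}\sqrt{7^*}=i\sqrt{3}\cdot i\sqrt{7}=-\sqrt{21}$ while $\sqrt{21^*}=\sqrt{21}$. The correct relation carries a quadratic-reciprocity sign $\prod_{p<q}(p^{\beta_p}/q^{\beta_q})(q^{\beta_q}/p^{\beta_p})$, and this sign must be tracked against the twist factors $\overline{d/p^{\beta_p}}$ that the CRT decomposition of $T(K,1;d)$ genuinely introduces (your local exponentials should read $e\bigl(\pm 2a\,\overline{(d/p^{\beta_p})}/p^{\beta_p}\bigr)$, not $e(\pm 2a/p^{\beta_p})$). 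These two families of signs cancel, but the cancellation is the content, not an accident. Second, your vanishing argument via $x\mapsto cx$ only works when $(\cdot/p^{\beta_p})$ is nontrivial, i.e.\ when $\beta_p$ is odd; for even $\beta_p$ the local factor is a genuine Kloosterman sum $K(mn,1;p^{\beta_p})$, and its vanishing when $mn$ is a nonresidue (and its two-term evaluation when $mn$ is a residue) come instead from the stationary-phase computation you allude to at the end. None of this is a real gap---you flag the $\beta_p\ge 2$ case as the main obstacle---but the proposal as written would need these corrections to go through.
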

\begin{proof}
	See equation (12.43) in Chapter 12 of \cite{IwaniecBook}.
\end{proof} 
As a corollary of Lemma \ref{Salie}, we see that 
\begin{corollary} \label{SalieSum}Let $T(m,n;d)$ be as above. Then,
$$T(m,n;d) \ll \sqrt{d}2^{\omega(d)}.$$
\end{corollary}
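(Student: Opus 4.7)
The plan is to deduce the corollary directly from the closed-form evaluation of $T(m,n;d)$ supplied by Lemma~\ref{Salie}, via the triangle inequality. Under the hypothesis $(d,2mn)=1$ inherited from that lemma, either $T(m,n;d)=0$ — in which case the bound holds trivially — or there exists $a$ with $a^2\equiv mn\pmod d$ and
$$T(m,n;d)=\sqrt{d^*}\left(\frac{n}{d}\right)\sum_{\substack{rs=d\\(r,s)=1}} e\!\left(2a\!\left(\frac{\bar r}{s}-\frac{\bar s}{r}\right)\right).$$

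I would then bound each factor of this expression individually. The prefactor $\sqrt{d^*}$ satisfies $|\sqrt{d^*}|=\sqrt{d}$ since $d^*=\pm d$, and the Jacobi symbol satisfies $\left|\left(\frac{n}{d}\right)\right|\le 1$. Each summand is an exponential of a real argument and hence has modulus exactly $1$. Thus by the triangle inequality,
$$|T(m,n;d)|\le\sqrt{d}\cdot\#\bigl\{(r,s)\in\NN^2:rs=d,\ (r,s)=1\bigr\}.$$

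To count the ordered coprime factorizations of $d$, observe that if $d=rs$ with $(r,s)=1$ and $p$ is any prime with $p^{\alpha}\|d$, then the full $p$-part of $d$ must lie in exactly one of $r$ or $s$ (otherwise $p\mid(r,s)$). Such a factorization is therefore determined by a choice of subset of the distinct prime divisors of $d$ — namely, those primes assigned to $r$ — yielding exactly $2^{\omega(d)}$ ordered factorizations. Substituting gives $|T(m,n;d)|\le\sqrt{d}\,2^{\omega(d)}$, as claimed.

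No real obstacle arises in this argument; once Lemma~\ref{Salie} is in hand, the corollary reduces to a routine counting exercise and a term-by-term modulus estimate. The only subtlety worth flagging is that the statement of the corollary implicitly carries the hypothesis $(d,2mn)=1$ from Lemma~\ref{Salie}, which is the regime relevant to the subsequent application of the bound when estimating the exponential sum $E(e_1,e_2,h_1,h_2,d)$.
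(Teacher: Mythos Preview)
Your proof is correct and is exactly the approach the paper intends: the corollary is stated immediately after Lemma~\ref{Salie} with no further argument, so the implicit reasoning is precisely your triangle-inequality bound together with the count $2^{\omega(d)}$ of coprime factorizations. Your remark about inheriting the hypothesis $(d,2mn)=1$ is also well placed.
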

\begin{lemma}\label{ExpSum*}
	Let $\ell $ be a prime and $k\geq 1$ be an integer. Then,
	\begin{align*}
	\sum_{ \substack{(a,\ell)=1\\ a\pmod{ \ell^k}}} e\left( \frac{a}{\ell^k}\right)=  \left\{\begin{array}{ll}
	-1, & k=1,\\
	0, &  k\geq 2.
	\end{array}\right.
	\end{align*}
\end{lemma}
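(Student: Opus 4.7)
The plan is to evaluate this as the Ramanujan-type sum $c_{\ell^k}(1)$ by straightforward inclusion-exclusion, separating the residues coprime to $\ell$ from those divisible by $\ell$.

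First I would write
\begin{align*}
\sum_{\substack{(a,\ell)=1\\ a\pmod{\ell^k}}} e\!\left(\frac{a}{\ell^k}\right)
= \sum_{a\pmod{\ell^k}} e\!\left(\frac{a}{\ell^k}\right) - \sum_{\substack{\ell \mid a\\ a\pmod{\ell^k}}} e\!\left(\frac{a}{\ell^k}\right).
\end{align*}
The first sum is a complete geometric sum of $\ell^k$-th roots of unity, hence vanishes for every $k\geq 1$ since $\ell^k>1$. For the second sum I would substitute $a=\ell b$ with $b$ ranging over residues modulo $\ell^{k-1}$, obtaining
\begin{align*}
\sum_{\substack{\ell \mid a\\ a\pmod{\ell^k}}} e\!\left(\frac{a}{\ell^k}\right)
= \sum_{b\pmod{\ell^{k-1}}} e\!\left(\frac{b}{\ell^{k-1}}\right).
\end{align*}

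Now I split into cases. If $k\geq 2$, the right-hand side is again a complete sum of $\ell^{k-1}$-th roots of unity with $\ell^{k-1}\geq \ell > 1$, so it vanishes, giving $0-0=0$. If $k=1$, the remaining sum collapses to the single term $b=0$, contributing $1$, so the total is $0-1=-1$.

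There is no substantive obstacle here; the identity is exactly the evaluation of $\mu(\ell^k)$ via the classical formula $c_n(1)=\mu(n)$ specialized to prime powers.
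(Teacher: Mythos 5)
Your proposal is correct and is essentially identical to the paper's own one-line proof: both write the sum over residues coprime to $\ell$ as the full sum over $a \pmod{\ell^k}$ minus the sum over multiples of $\ell$ (which after substituting $a=\ell b$ becomes the full sum modulo $\ell^{k-1}$), and then evaluate the resulting geometric sums, with the $k=1$ case yielding $0-1=-1$. Nothing further is needed.
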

	\begin{proof}
	\begin{align*}
	\sum_{ \substack{(a,\ell)=1\\ a\pmod { \ell^k}}} e\left( \frac{a}{\ell^k}\right)= \sum_{a\pmod {\ell^k}} e\left( \frac{a}{\ell^k}\right)- \sum_{ a\pmod {\ell^{k-1}}} e\left( \frac{a}{\ell^{k-1}}\right)= \left\{\begin{array}{ll}
-1, & k=1,\\
0, &  k\geq 2.
	\end{array}\right.
	\end{align*}
	\end{proof}

Now we are ready to prove Lemma \ref{ExpSumEst}. 
\begin{proof}[Proof of Lemma \ref{ExpSumEst}] We rewrite \eqref{Edef} as
	\begin{align*}
	E(e_1,e_2,h_1,h_2,d)&=\sum_{ \substack{e_1^2 u^2+e_2^2v^2\equiv -1\pmod d\\ (uv,d)=1}} e\left( \frac{uh_1+vh_2}{d}\right)\\
	&=\frac{1}{d} \sum_{a\pmod d} \sum_{\substack{u \pmod d\\ (u,d)=1}} \sum_{\substack{v\pmod d\\ (v,d)=1}}e\left( \frac{uh_1+vh_2}{d}\right) e\left( \frac{a(e_1^2u^2+e_2^2v^2+1)}{d}\right)\\
	&= \frac{1}{d}\sum_{a\pmod d}e\left( \frac{a}{d}\right)\sum_{\substack{u\pmod d\\ (u,d)=1}}e\left(\frac{ae_1^2u^2+uh_1}{d}\right)\sum_{\substack{v\pmod d\\ (v,d)=1}}e\left( \frac{ae_2^2v^2+vh_2}{d}\right).
	\end{align*}
	From the Chinese remainder theorem, it is enough to consider $E(e_1,e_2,h_1,h_2,\ell^\alpha)$ for primes $\ell$.
	For $(e_1e_2,\ell)=1$, we have
	\begin{align}
	&E(e_1,e_2,h_1,h_2,\ell^\alpha)\nonumber\\&= \frac{1}{\ell^\alpha} \sum_{a\pmod {\ell^\alpha}} \sum_{ (uv,\ell)=1} e\left( \frac{h_1\overline{e_1}u+h_2\overline{e_2}v}{\ell^\alpha}\right)e\left( \frac{au^2+av^2+a}{\ell^\alpha}\right)\nonumber\\
	&=\frac{1}{\ell^\alpha}\sum_{k=1}^\alpha \sum_{a=\ell^k}e\left( \frac{\ell^{\alpha-k}a}{\ell^\alpha}\right) \sum_{ \substack{(u,\ell)=1\\ u \pmod {\ell^\alpha}}}  e\left( \frac{\ell^{\alpha-k}au^2+h_1\overline{e_1}u}{\ell^\alpha}\right)\sum_{ \substack{(v,\ell)=1\\ v \pmod {\ell^\alpha}}}  e\left( \frac{\ell^{\alpha-k}av^2+h_2\overline{e_2}v}{\ell^\alpha}\right) \nonumber\\&+\frac{1}{\ell^\alpha} \sum_{k=1}^\alpha \sum_{ \substack{(a,\ell)=1\\ a\pmod{\ell^k}}}e\left( \frac{\ell^{\alpha-k}a}{\ell^\alpha}\right) \sum_{ \substack{(u,\ell)=1\\ u \pmod {\ell^\alpha}}}  e\left( \frac{\ell^{\alpha-k}au^2+h_1\overline{e_1}u}{\ell^\alpha}\right)\sum_{ \substack{(v,\ell)=1\\ v \pmod {\ell^\alpha}}}  e\left( \frac{\ell^{\alpha-k}av^2+h_2\overline{e_2}v}{\ell^\alpha}\right).\label{sumCoprime}
	\end{align}
	From Lemma \ref{ExpSum*}, we see that 
	\begin{align*}
\frac{1}{\ell^\alpha}\sum_{k=1}^\alpha \sum_{a=\ell^k}e\left( \frac{\ell^{\alpha-k}a}{\ell^\alpha}\right) \sum_{ \substack{(u,\ell)=1\\ u \pmod {\ell^\alpha}}}  e\left( \frac{\ell^{\alpha-k}au^2+h_1\overline{e_1}u}{\ell^\alpha}\right)\sum_{ \substack{(v,\ell)=1\\ v \pmod {\ell^\alpha}}}  e\left( \frac{\ell^{\alpha-k}av^2+h_2\overline{e_2}v}{\ell^\alpha}\right)= \frac{1}{\ell^\alpha}.
	\end{align*}
	
	For $(a,\ell)=1$, $\ell^{\alpha-k+1}\mid h_1$, from \eqref{factor}, \eqref{completeSq}, and \eqref{GaussSumRelation}, after writing $h_1=\ell^{\alpha-k+1}h_1'$, we have that if $k\geq 3$, 
	\begin{align}
	&\sum_{ \substack{(u,\ell)=1\nonumber\\ u \pmod {\ell^\alpha}}}  e\left( \frac{\ell^{\alpha-k}au^2+h_1\overline{e_1}u}{\ell^\alpha}\right)\\&= \sum_{ u\pmod {\ell^\alpha}} e\left( \frac{\ell^{\alpha-k}au^2+h_1 \overline{e_1}u}{\ell^\alpha}\right)- \sum_{ u \pmod {\ell^{\alpha-1}}} e\left( \frac{\ell^{\alpha-k+1}au^2+h_1\overline{e_1}u}{\ell^{\alpha-1}}\right)\nonumber\\
	&= \ell^{\alpha-k} \sum_{u \pmod { \ell^k}}e\left( \frac{au^2+h_1'\ell\overline{e_1}u}{\ell^k}\right)-\ell^{\alpha-k+1} \sum_{u \pmod { \ell^{k-2}}}e\left( \frac{au^2+h_1'\overline{e_1}u}{\ell^{k-2}}\right)\nonumber\\
	&= \ell^{\alpha-k} \left( \frac{a}{\ell^k}\right)e\left( \frac{-\overline{4ae_1^2} h_1'^2\ell^2}{\ell^k}\right)S(1,0,\ell^k)- \ell^{\alpha-k+1} \left( \frac{a}{\ell^{k-2}}\right)e\left( \frac{-\overline{4ae_1^2} h_1'^2}{\ell^{k-2}}\right)S(1,0,\ell^{k-2})\nonumber\\
	&=0.\label{k>2}
	\end{align}
		For $(a,\ell)=1$, $\ell^{\alpha-k+1}\mid h_1$, from \eqref{factor}, \eqref{completeSq}, and \eqref{GaussSumRelation}, after writing $h_1=\ell^{\alpha-k+1}h_1'$, we have that if $k< 3$, then $\ell^{\alpha-1}\mid h_1$. It thus follows that 
		\begin{align}
		&\sum_{ \substack{(u,\ell)=1\nonumber\\ u \pmod {\ell^\alpha}}}  e\left( \frac{\ell^{\alpha-k}au^2+h_1\overline{e_1}u}{\ell^\alpha}\right)\\&= \sum_{ u\pmod {\ell^\alpha}} e\left( \frac{\ell^{\alpha-k}au^2+h_1 \overline{e_1}u}{\ell^\alpha}\right)- \sum_{ u \pmod {\ell^{\alpha-1}}} e\left( \frac{\ell^{\alpha-k+1}au^2+h_1\overline{e_1}u}{\ell^{\alpha-1}}\right)\nonumber\\
		&= \ell^{\alpha-k} \sum_{u \pmod { \ell^k}}e\left( \frac{au^2+h_1'\ell\overline{e_1}u}{\ell^k}\right)-\sum_{u \pmod { \ell^{\alpha-1}}}e\left( \frac{h_1\overline{e_1}u}{\ell^{\alpha-1}}\right)\nonumber\\
		&= \ell^{\alpha-k} \left( \frac{a}{\ell^k}\right)e\left( \frac{-\overline{4ae_1^2} h_1'^2\ell^2}{\ell^k}\right)S(1,0,\ell^k)-\ell^{\alpha-1}\nonumber\\
		&= \ell^{\alpha-k} \left( \frac{a}{\ell^k}\right)S(1,0,\ell^k)-\ell^{\alpha-1}.\label{k<3}
		\end{align}	
	Similarly, for $(a,\ell)=1$, $\ell^{\alpha-k}\mid\mid h_1$, after writing $h_1=\ell^{\alpha-k}h_1'$, we have that if $k\geq2$, then
	\begin{align}
	&\sum_{ \substack{(u,\ell)=1\\ u \pmod {\ell^\alpha}}}  e\left( \frac{\ell^{\alpha-k}au^2+h_1\overline{e_1}u}{\ell^\alpha}\right)= \ell^{\alpha-k} \left( \frac{a}{\ell^k}\right)e\left( \frac{-\overline{4ae_1^2} h_1'^2}{\ell^k}\right)S(1,0,\ell^k),\label{exactDivideh}
	\end{align}
	and if $k=1$, then 
		\begin{align}
		&\sum_{ \substack{(u,\ell)=1\\ u \pmod {\ell^\alpha}}}  e\left( \frac{\ell^{\alpha-1}au^2+h_1\overline{e_1}u}{\ell^\alpha}\right)= \ell^{\alpha-1} \left( \frac{a}{\ell}\right)e\left( \frac{-\overline{4ae_1^2} h_1'^2}{\ell}\right)S(1,0,\ell)-\ell^{\alpha-1}.\label{exactDividehk=1}
		\end{align}
For $(a,\ell)=1$, $\ell^{\alpha-k}\nmid h_1$, we have that if $k\geq 2$,
	\begin{align}
	&\sum_{ \substack{(u,\ell)=1\\ u \pmod {\ell^\alpha}}}  e\left( \frac{\ell^{\alpha-k}au^2+h_1\overline{e_1}u}{\ell^\alpha}\right)=0.\label{hcoprime}
	\end{align}
	and that if $k=1$, 	\begin{align}
	&\sum_{ \substack{(u,\ell)=1\\ u \pmod {\ell^\alpha}}}  e\left( \frac{\ell^{\alpha-k}au^2+h_1\overline{e_1}u}{\ell^\alpha}\right)=-\sum_{ u \pmod{\ell^{\alpha-1}} }e\left( \frac{h_1\overline{e_1}u}{\ell^{\alpha-1}}\right)=\left\{\begin{array}{ll}
-1, & \alpha=1,\\
0, & \alpha\geq 2.
	\end{array}.\right.\label{hcoprimek=1}
	\end{align}
	Let $h_1=\ell^{t}h_1'$ and $h_2=\ell^sh_2'$, where $(h_1'h_2',\ell)=1$. From \eqref{hcoprime} and \eqref{hcoprimek=1}, we see that only the terms with $k$ satisfying $\alpha-k \leq t$ and $\alpha-k \leq s$ will contribute to the sum \eqref{sumCoprime} unless $\alpha=1$. Without loss of generality, we can assume $t\leq s$. Thus we only need to consider $k\geq \alpha-t\geq \alpha-s$ when $\alpha\geq 2$.
From \eqref{k>2}, \eqref{k<3} and \eqref{exactDivideh}, we see that we can further restrict $k$ such that $k=1,2,\alpha-t$. In the following we consider $\alpha=1$ in Case 0 and  $\alpha\geq 2$ in Case 1-Case 6.

Case 0. For prime $\ell$, $(e_1e_2,\ell)=1$, we have 
\begin{align}
&\sum_{ \substack{e_1^2 u^2+e_2^2 v^2 \equiv -1\pmod \ell\\ (uv,\ell)=1}} e\left( \frac{h_1 u+ h_2 v}{\ell }\right)\nonumber\\
&= \sum_{ \substack{u^2+v^2\equiv -1 \pmod \ell \\ (uv,\ell)=1}} e\left( \frac{h_1 \overline{e_1}u+ h_2 \overline{e_2}v}{\ell}\right)\nonumber\\
&= \frac{1}{\ell }\sum_{a\mod \ell} \sum_{ (uv,\ell)=1} e\left(  \frac{h_1 \overline{e_1}u+ h_2 \overline{e_2}v}{\ell}\right) e\left( \frac{a( u^2+v^2+1)}{\ell}\right)\nonumber\\
& = \frac{1}{\ell }+ \frac{1}{\ell }\sum_{(a,\ell)=1} \sum_{ (uv,\ell)=1} e\left(  \frac{h_1 \overline{e_1}u+ h_2 \overline{e_2}v}{\ell}\right) e\left( \frac{a( u^2+v^2+1)}{\ell}\right)\nonumber\\
& = \frac{1}{\ell }+ \frac{1}{\ell} \sum_{ (a,\ell)=1} e\left( \frac{a}{\ell}\right) \sum_{ (u,\ell)=1}e\left( \frac{au^2+h_1 \overline{e_1}u}{\ell}\right) \sum_{ (v,\ell)=1}e\left( \frac{av^2+h_2 \overline{e_2}v}{\ell}\right)\nonumber\\
&= \frac{1}{\ell} + \frac{1}{\ell}\sum_{(a,\ell)=1} e\left( \frac{a}{\ell}\right) \left(\left( \frac{a}{\ell }\right) e\left( \frac{-\overline{4a}\overline{e_1}^2h_1^2}{\ell}\right)\sqrt{\ell^*}-1\right)\left(\left( \frac{a}{\ell }\right) e\left( \frac{-\overline{4a}\overline{e_2}^2h_2^2}{\ell}\right)\sqrt{\ell^*}-1\right)\nonumber\\
&= \frac{1}{\ell}+ \sum_{ (a,\ell)=1}e\left( \frac{a-\overline{4ae_1^2}h_1^2-\overline{4ae_2^2}h_2}{\ell}\right)\left( \frac{-1}{\ell}\right) + O \left( \sqrt{\ell}\right)\nonumber\\
&= O \left( \sqrt{\ell}\right).\label{primeModuli}
\end{align}

Case 1.
If $t<\alpha-1$, then $\ell^{\alpha-1}\nmid h_1$, thus only terms with $k=\alpha-t\geq 2$ contribute to \eqref{sumCoprime} when $\alpha\geq 2$ by \eqref{hcoprime} and \eqref{hcoprimek=1}. If $t=s<\alpha-1$, then we have 
\begin{align*}
&E(e_1,e_2,h_1,h_2,\ell^\alpha)\\
&= \frac{1}{\ell^\alpha}+\frac{1}{\ell^\alpha} \sum_{k=1,2\alpha-t} \sum_{ \substack{(a,\ell)=1\\ a\pmod{\ell^k}}}e\left( \frac{\ell^{\alpha-k}a}{\ell^\alpha}\right) \sum_{ \substack{(u,\ell)=1\\ u \pmod {\ell^\alpha}}}  e\left( \frac{\ell^{\alpha-k}au^2+h_1\overline{e_1}u}{\ell^\alpha}\right)\sum_{ \substack{(v,\ell)=1\\ v \pmod {\ell^\alpha}}}  e\left( \frac{\ell^{\alpha-k}av^2+h_2\overline{e_2}v}{\ell^\alpha}\right)\\
&= \frac{1}{\ell^\alpha}+ \frac{1}{\ell^\alpha} \sum_{k=\alpha-t} \sum_{ \substack{(a,\ell)=1\\ a\pmod{\ell^k}}}e\left( \frac{a}{\ell^k}\right) \ell^{\alpha-k} e\left( \frac{-\overline{4ae_1^2}h_1'^2}{\ell^k}\right)S(1,0,\ell^k)\ell^{\alpha-k} e\left( \frac{-\overline{4ae_2^2}h_2'^2}{\ell^k}\right)S(1,0,\ell^k)\\
&= \frac{1}{\ell^\alpha}+ \ell^{t} \left( \frac{-1}{\ell^{\alpha-t}}\right) \sum_{ \substack{(a,\ell)=1\\ a\pmod { \ell^{\alpha-t}}}} e\left( \frac{a-\bar{a}(\overline{4e_1^2}h_1'^2+4\overline{e_2^2}h_2'^2)}{\ell^{\alpha-t}}\right) \\
&= O \left( \sqrt{\ell^{\alpha+t}}\right),
\end{align*}
where the last equality follows from Lemma \ref{KloostermanSumBounds}.  

Case 2. If $s\geq \alpha-1>t$, then from \eqref{k>2}, we see that if  $k=\alpha-t\geq3$ then $$E(e_1,e_2,h_1,h_2, \ell^\alpha)=0=O\left( \sqrt{\ell^{\alpha+t}}\right).$$ 

Case 3. When $s\geq \alpha-1>t,k=\alpha-t=2$, from \eqref{k<3} we have 
\begin{align*}
&E(e_1,e_2,h_1,h_2,\ell^\alpha)\\
&= \frac{1}{\ell^\alpha}+\frac{1}{\ell^\alpha} \sum_{k=1,2,\alpha-t} \sum_{ \substack{(a,\ell)=1\\ a\pmod{\ell^k}}}e\left( \frac{\ell^{\alpha-k}a}{\ell^\alpha}\right) \sum_{ \substack{(u,\ell)=1\\ u \pmod {\ell^\alpha}}}  e\left( \frac{\ell^{\alpha-k}au^2+h_1\overline{e_1}u}{\ell^\alpha}\right)\sum_{ \substack{(v,\ell)=1\\ v \pmod {\ell^\alpha}}}  e\left( \frac{\ell^{\alpha-k}av^2+h_2\overline{e_2}v}{\ell^\alpha}\right)\\
&= \frac{1}{\ell^\alpha}+ \frac{1}{\ell^\alpha} \sum_{k=\alpha-t} \sum_{ \substack{(a,\ell)=1\\ a\pmod{\ell^k}}}e\left( \frac{a}{\ell^k}\right) \ell^{\alpha-k} e\left( \frac{-\overline{4ae_1^2}h_1'^2}{\ell^k}\right)S(1,0,\ell^k)\left(\ell^{\alpha-k} \left( \frac{a}{\ell^k}\right)S(1,0,\ell^k)-\ell^{\alpha-1}\right)\\
&= \frac{1}{\ell^\alpha}+ \ell^{t} \left( \frac{-1}{\ell^{\alpha-t}}\right) \sum_{ \substack{(a,\ell)=1\\ a\pmod { \ell^{\alpha-t}}}}\left( \frac{a}{\ell^{\alpha-t}}\right) e\left( \frac{a-\bar{a}(\overline{4e_1^2}h_1'^2)}{\ell^{\alpha-t}}\right)- \frac{\ell^{\alpha-k+\alpha-1}}{\ell^\alpha}\sum_{\substack{(a,\ell)=1\\ a \pmod{\ell^k}}} e\left( \frac{a-\overline{a}(\overline{4e_1^2}h_1'^2)}{\ell^k}\right)S(1,0,\ell^k)\\
&=O \left( \sqrt{\ell^{\alpha+t}}\right),
\end{align*}
where we used Lemma \ref{SalieSum}.
 
Case 4. If $s>t=\alpha-1$, then we have 
\begin{align*}
E&(e_1,e_2,h_1,h_2,\ell^\alpha)\\
=& \frac{1}{\ell^\alpha}+ \frac{1}{\ell^\alpha}  \sum_{ \substack{(a,\ell)=1\\ a\pmod{\ell}}}e\left( \frac{a}{\ell}\right)\left( \ell^{\alpha-1} \left( \frac{a}{\ell}\right)e\left( \frac{-\overline{4ae_1^2} h_1'^2}{\ell}\right)S(1,0,\ell)-\ell^{\alpha-1}\right)\left( \ell^{\alpha-1} \left( \frac{a}{\ell}\right)S(1,0,\ell)-\ell^{\alpha-1}\right)\\
&+  \frac{1}{\ell^\alpha}  \sum_{ \substack{(a,\ell)=1\\ a\pmod{\ell^2}}}e\left( \frac{a}{\ell^2}\right)\left( \ell^{\alpha-2} \left( \frac{a}{\ell}\right)S(1,0,\ell^2)-\ell^{\alpha-1}\right)\left( \ell^{\alpha-2} \left( \frac{a}{\ell}\right)S(1,0,\ell^2)-\ell^{\alpha-1}\right)\\
=& \frac{1}{\ell^\alpha}+ \ell^{\alpha-1} \left( \frac{-1}{\ell}\right) \sum_{ \substack{(a,\ell)=1\\ a\pmod { \ell}}} e\left( \frac{a-\bar{a}\overline{4e_1^2}h_1'^2}{\ell}\right)+ 2\ell^{\alpha-2}\\&-\ell^{\alpha-2}\sum_{ \substack{(a,\ell)=1\\ a\pmod { \ell}}}\left( \frac{a}{\ell}\right)\left(e\left( \frac{a-\overline{4e_1^2}h_1'^2\bar{a}}{\ell}\right)+ e\left( \frac{a}{\ell}\right)\right)S(1,0,\ell) \\\\&-2\ell^{\alpha-3}\sum_{ \substack{(a,\ell)=1\\ a\pmod { \ell^2}}}e\left( \frac{a}{\ell^2}\right)\left( \frac{a}{\ell}\right)S(1,0,\ell^2)+ \sum_{ \substack{(a,\ell)=1\\ a\pmod { \ell^2}}} e\left( \frac{a}{\ell^2}\right) S(1,0,\ell^2)^2\ell^{\alpha-4}\\
=& O \left( \sqrt{\ell^{\alpha+t}}\right).
\end{align*}  

Case 5. If $s\geq t\geq \alpha$, then from \eqref{k<3}, we have 
\begin{align*}
&E(e_1,e_2,h_1,h_2,\ell^\alpha)\\&= \frac{1}{\ell^\alpha}+ \frac{1}{\ell^\alpha} \sum_{k=1}^2 \sum_{ \substack{(a,\ell)=1\\ a\pmod {\ell^k}}} e\left( \frac{a}{\ell^k}\right) \left( \ell^{\alpha-k} \left( \frac{a}{\ell^k}\right)S(1,0,\ell^k)-\ell^{\alpha-1}\right)\left(\ell^{\alpha-k} \left( \frac{a}{\ell^k}\right)S(1,0,\ell^k)-\ell^{\alpha-1}\right)\\
&= \frac{1}{\ell^\alpha}+ \sum_{k=1}^2 \sum_{ \substack{(a,\ell)=1\\ a\pmod {\ell^k}}} e\left( \frac{a}{\ell^k}\right)\left( \ell^{\alpha-k} \left( \frac{-1}{\ell^k}\right)-2\ell^{\alpha-k-1}S(1,0,\ell^k)\right)+ \ell^{\alpha-2}\\
&=O \left( \ell^{\alpha-1}\right)= O \left( \sqrt{\ell^{2\alpha}}\right).
\end{align*}
where the last equality follows from Lemma \eqref{ExpSum*} for $k\leq 2$. 

Case 6. If $s=t= \alpha-1$, then $k=1,2$ contribute to \eqref{sumCoprime}. From \eqref{exactDividehk=1}, \eqref{k<3} and Lemma \ref{ExpSum*}, Lemma \ref{Salie}, we have
\begin{align*}
&E(e_1,e_2,h_1,h_2,\ell^\alpha)\\
&= \frac{1}{\ell^\alpha}+ \frac{1}{\ell^\alpha}  \sum_{ \substack{(a,\ell)=1\\ a\pmod{\ell}}}e\left( \frac{a}{\ell}\right)\left( \ell^{\alpha-1} \left( \frac{a}{\ell}\right)e\left( \frac{-\overline{4ae_1^2} h_1'^2}{\ell}\right)S(1,0,\ell)-\ell^{\alpha-1}\right)\\ & \ \ \ \ \ \ \ \ \ \ \ \ \ \ \ \ \ \ \ \ \times\left( \ell^{\alpha-1} \left( \frac{a}{\ell}\right)e\left( \frac{-\overline{4ae_1^2} h_2'^2}{\ell}\right)S(1,0,\ell)-\ell^{\alpha-1}\right)\\
&+  \frac{1}{\ell^\alpha}  \sum_{ \substack{(a,\ell)=1\\ a\pmod{\ell^2}}}e\left( \frac{a}{\ell^2}\right)\left( \ell^{\alpha-2} \left( \frac{a}{\ell}\right)S(1,0,\ell^2)-\ell^{\alpha-1}\right)\left( \ell^{\alpha-2} \left( \frac{a}{\ell}\right)S(1,0,\ell^2)-\ell^{\alpha-1}\right)\\
&= \frac{1}{\ell^\alpha}+ \ell^{\alpha-1} \left( \frac{-1}{\ell}\right) \sum_{ \substack{(a,\ell)=1\\ a\pmod { \ell}}} e\left( \frac{a-\bar{a}(\overline{4e_1^2}h_1'^2+4\overline{e_2^2}h_2'^2)}{\ell}\right)+ 2\ell^{\alpha-2}\\& \ \ -\ell^{\alpha-2}\sum_{ \substack{(a,\ell)=1\\ a\pmod { \ell}}}\left( \frac{a}{\ell}\right)\left(e\left( \frac{a-\overline{4e_1^2}h_1'^2\bar{a}}{\ell}\right)+ e\left( \frac{a-\overline{4e_2^2}h_2'^2\bar{a}}{\ell}\right)\right)S(1,0,\ell) \\\\& \ \ -2\ell^{\alpha-3}\sum_{ \substack{(a,\ell)=1\\ a\pmod { \ell^2}}}e\left( \frac{a}{\ell^2}\right)\left( \frac{a}{\ell}\right)S(1,0,\ell^2)+ \sum_{ \substack{(a,\ell)=1\\ a\pmod { \ell^2}}} e\left( \frac{a}{\ell^2}\right) S(1,0,\ell^2)^2\ell^{\alpha-4}\\
&= O \left( \sqrt{\ell^{\alpha+t}}\right).
\end{align*}

Combining all cases, we see that 
\begin{align}\label{primePowerModuli}
E(e_1,e_2,h_1,h_2,\ell^\alpha) = O \left( \sqrt{(h_1,h_2,\ell^\alpha)\ell^{\alpha}}\right), \text{ if } \alpha \geq 2.
\end{align}

Combining \eqref{primeModuli} and \eqref{primePowerModuli}, we have 
\begin{align}
E(e_1,e_2,h_1,h_2,\ell^\alpha) = O \left( \sqrt{(h_1,h_2,\ell^\alpha) \ell^\alpha}\right), \text{ for all } \alpha \geq 1.\label{AllPrimePower}
\end{align}
	For $E(e_1,e_2,h_1,h_2,d)$, by multiplicativity and \eqref{AllPrimePower}, we have 
	\begin{align*}
	E(e_1,e_2,h_1,h_2,d)= \prod_{ \ell^{\alpha_\ell}\mid\mid d} E(e_1,e_2,h_1,h_2,\ell^{\alpha_\ell})\ll C^{\omega(d)}\sqrt{(h_1,h_2,d)d},
	\end{align*}
	where $C$ is an absolute constant. 
	\end{proof}
\section{Acknowledgement}
The author wishes to thank Valentin Blomer for suggesting the problem and for comments on an earlier draft. The author expresses gratitude to Kyle Pratt for helpful discussions and comments.

	\bibliographystyle{plain}
	\bibliography{DIVISOR.bib}
\end{document}